\documentclass[11pt,a4paper]{amsart}
\usepackage[T1]{fontenc}
\usepackage[utf8]{inputenc}
\usepackage{lmodern}
\usepackage{amsmath,amssymb,mathtools}
\usepackage{microtype}
\usepackage[margin=29mm]{geometry}
\usepackage{xcolor}
\usepackage[colorlinks=true,linkcolor=blue!45!black,citecolor=blue!45!black,urlcolor=blue!45!black]{hyperref}
\usepackage{enumitem}
\usepackage{needspace}
\hypersetup{
 pdftitle={Hardy inequalities on the positive integer lattice: a dimensional transition},
 pdfauthor={Carlos Lizama},
 pdfsubject={Sharp Hardy constants and a dimensional transition on the positive integer lattice},
 pdfkeywords={Hardy inequality, positive integer lattice, sharp constant, essential spectrum}
}
\numberwithin{equation}{section}
\newtheorem{theorem}{Theorem}[section]
\newtheorem{proposition}[theorem]{Proposition}
\newtheorem{lemma}[theorem]{Lemma}
\newtheorem{corollary}[theorem]{Corollary}
\theoremstyle{remark}

\newcommand{\N}{\mathbb N}
\newcommand{\R}{\mathbb R}

\newcommand{\Om}{\Omega_N}
\newcommand{\dd}{\,\mathrm d}
\newcommand{\Cp}{C_+(N)}
\newcommand{\cfin}{c_{00}}
\newcommand{\ess}{\mathrm{ess}}
\newcommand{\qform}{\mathfrak q_N}
\newcommand{\wform}{\mathfrak w_N}
\DeclareMathOperator{\supp}{supp}
\DeclareMathOperator{\Dom}{Dom}

\allowdisplaybreaks[1]
\title[Hardy inequalities on the positive lattice]{Hardy inequalities on the positive integer lattice:\\ a dimensional transition}
\author{Carlos Lizama}
\address{Departamento de Matem\'atica y Ciencia de la Computaci\'on, Universidad de Santiago de Chile, Santiago, Chile}
\email{carlos.lizama@usach.cl}
\urladdr{https://orcid.org/0000-0002-9807-1100}
\date{}
\subjclass[2020]{Primary 39A12; Secondary 26D15, 47A10}
\keywords{Discrete Hardy inequality, positive integer lattice, sharp constant, dimensional transition, essential spectrum, ground state}
\thanks{This work was supported by ANID--FONDECYT Regular project 1260057.}

\begin{document}

\begin{abstract}
We determine exactly in which dimensions the sharp Hardy coefficient on the positive integer lattice agrees with that on the positive Euclidean cone, with Dirichlet conditions on every coordinate face. Equality holds in dimensions one and two, whereas the discrete coefficient is strictly smaller in every dimension at least three. An exact transformation identifies the lattice problem with a weighted Dirichlet problem on a cube. A tangential integration identity evaluates the defect of an explicit critical profile and proves the dimensional transition. The continuous coefficient remains the lower edge of the essential spectrum of the transformed operator; below that edge, the sharp discrete coefficient is a simple eigenvalue with a positive eigenfunction. In dimension three, an explicit supersolution places the coefficient strictly between twelve and forty-nine quarters. A representation on a product of three-dimensional spheres relates the spectral threshold to the local geometry of the transformed problem.
\end{abstract}

\maketitle

\section{Introduction}

Replacing derivatives by finite differences is a basic way to pass from continuous models to lattice models. Finite differences recover derivatives on scales large compared with the lattice spacing, but this agreement need not preserve the best constants in an inequality. Hardy inequalities make this distinction precise: they compare the energy required for a function to vary with its size measured by an inverse-square weight. Their best coefficient is also the threshold beyond which an attractive inverse-square potential produces negative energy. Determining this coefficient therefore connects a question about inequalities with a question about stability. The classical theory and its geometric and spectral consequences are discussed in \cite{HLP,Davies}.

For the inverse-square weight on nearest-neighbour lattices, the exact range of dimensions in which the continuous coefficient survives has only recently been resolved. On the full integer lattice, the continuous coefficient is preserved through dimension eight and fails from dimension nine onwards \cite{AlpayEarly,LizamaEarly,Alpay,LizamaFull}. This exact division into dimensions with equality and dimensions with strict inequality identifies a phenomenon that asymptotic agreement between differences and derivatives does not explain. It raises a broader question: which features of a discrete energy, a prescribed weight and a boundary condition determine whether the best continuous constant survives discretization?

We study this comparison when every coordinate is positive and functions vanish on the coordinate faces. In one dimension the continuous region is a half-line; in two dimensions it is the first quadrant. In arbitrary dimension we call it the positive cone. Its discrete counterpart, the positive integer lattice, carries the nearest-neighbour difference energy. The boundary conditions are essential: they change the sharp continuous coefficient and may also change the dimensions in which discretization preserves it.

We prove that the continuous and discrete coefficients agree exactly in dimensions one and two. From dimension three onwards, the discrete coefficient is strictly smaller. The proof also explains why this happens. An exact transformation converts the lattice problem into a continuous problem on a cube. Concentration near one corner recovers the continuous coefficient, while an identity over the whole cube detects the strict decrease. The transition thus separates the information supplied by local geometry from the information needed to determine the global optimum. The resulting transition from dimension two to dimension three shows that the critical dimension changes when Dirichlet faces are imposed. Together with the full-lattice results and recent weighted one-dimensional comparisons \cite{Barki}, this gives a concrete starting point for a systematic study of preservation and loss of sharp Hardy constants under discretization.

\subsection{The problem and its dimensional classification}

Let $\N=\{1,2,\ldots\}$. We work on the positive integer lattice
\[
 \N^N=\{n=(n_1,\ldots,n_N): n_j\geq 1\},
 \qquad N\geq1.
\]
Every function on this set is extended by zero to the coordinate faces. The Dirichlet lattice Laplacian is
\begin{equation}\label{eq:laplacian}
 (L_+u)(n)=2Nu(n)-\sum_{j=1}^N\bigl(u(n+e_j)+u(n-e_j)\bigr),
\end{equation}
where $e_j$ is the $j$th coordinate vector. Write $|n|^2=\sum_j n_j^2$, and let $\cfin(\N^N)$ denote the finitely supported functions. For such a function $u$, define
\begin{equation}\label{eq:discrete-forms}
 \mathcal E_+(u)=\sum_{n\in\N^N}\sum_{j=1}^N|u(n)-u(n-e_j)|^2,
 \qquad
 \mathcal D_+(u)=\sum_{n\in\N^N}\frac{|u(n)|^2}{|n|^2}.
\end{equation}
Thus $\mathcal E_+(u)=\langle L_+u,u\rangle$. Each interior edge is counted once, and edges meeting a Dirichlet face contribute to the energy. The largest coefficient for which $\mathcal E_+(u)\geq C_+(N)\mathcal D_+(u)$ holds for every such $u$ is
\begin{equation}\label{eq:C-def}
 \Cp=\inf_{0\ne u\in\cfin(\N^N)}
                  \frac{\mathcal E_+(u)}{\mathcal D_+(u)}.
\end{equation}

The continuous benchmark is the Hardy inequality on the positive cone $\mathcal O_N=(0,\infty)^N$:
\[
 \int_{\mathcal O_N}|\nabla f|^2\dd x
 \geq A_N\int_{\mathcal O_N}\frac{|f|^2}{|x|^2}\dd x,
 \qquad f\in C_c^\infty(\mathcal O_N).
\]
Its sharp coefficient is
\begin{equation}\label{eq:A-def}
 A_N=\frac{(3N-2)^2}{4}.
\end{equation}
This value follows by separating radial variation from the angular contribution of the Dirichlet boundary. Su and Yang \cite{SuYang} obtain the more general coefficient when only a prescribed number of coordinates are required to be positive. We recall the calculation relevant here in Section~\ref{sec:duality}.

Our main result determines precisely when the positive lattice preserves this benchmark. The one-dimensional value recovers the classical discrete Hardy inequality.

\begin{theorem}\label{thm:main}
The constants in \eqref{eq:C-def} satisfy
\begin{equation}\label{eq:classification}
 C_+(1)=\frac14,\qquad C_+(2)=4,\qquad
 \Cp<A_N\quad\text{for every }N\geq3.
\end{equation}
In dimension three,
\begin{equation}\label{eq:three-bounds}
                  12<C_+(3)<\frac{49}{4}.
\end{equation}
In particular, $\Cp=A_N$ holds exactly in dimensions one and two.
\end{theorem}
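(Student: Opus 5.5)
The plan rests on the exact transformation announced in the introduction. Write $\mathcal E_+$ and $\mathcal D_+$ on the torus through a Fourier sine series: for $u\in\cfin(\N^N)$ set
\[
\hat u(\theta)=\sum_n u(n)\prod_j\sin(n_j\theta_j),\qquad \theta\in(0,\pi)^N .
\]
Then $\mathcal E_+(u)$ becomes $c\int|\hat u|^2\sum_j 4\sin^2(\theta_j/2)\dd\theta$. The weight is handled by $|n|^{-2}=\int_0^\infty e^{-t|n|^2}\dd t$, or equivalently by writing $u(n)=|n|^{-1}\cdot(\ldots)$, which gives a dual formulation. The simpler route is duality. $\Cp$ is the best constant in $\mathcal D_+\le \Cp^{-1}\mathcal E_+$, so $\Cp^{-1}$ is the norm of $|n|^{-1}L_+^{-1/2}$, equivalently of $L_+^{-1/2}|n|^{-1}$. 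The inequality then becomes a weighted Dirichlet problem on the cube, with the "lattice" differences turned into the symbol $\omega(\theta)=\sum_j4\sin^2(\theta_j/2)$ and multiplication by $|n|^{-2}$ turned into a positive integral operator. I will take as given the transformation identifying $\Cp$ with the bottom of the spectrum of this transformed operator, and the continuous Hardy constant $A_N$ on $\mathcal O_N$, computed in Section~\ref{sec:duality} by splitting radial and angular parts. The angular Dirichlet eigenvalue on the spherical simplex is $2N$ for the first eigenfunction $\prod x_j/|x|^N$, and $((N-2)/2)^2+2N=(3N-2)^2/4$.

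The upper bound $\Cp\le A_N$ in all dimensions comes from concentration. Take $f\in C_c^\infty(\mathcal O_N)$ nearly optimal for $A_N$ and set $u_\lambda(n)=f(n/\lambda)$. As $\lambda\to\infty$, a Riemann-sum argument gives $\mathcal E_+(u_\lambda)/\mathcal D_+(u_\lambda)\to\int|\nabla f|^2/\int|f|^2|x|^{-2}$. In the cube picture this is concentration at the corner $\theta=0$, where $\omega(\theta)\approx|\theta|^2$.

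For the lower bound in dimensions one and two I use a ground-state (weight) argument. It suffices to find a positive $w$ on $\N^N$ with $L_+w\ge A_N|n|^{-2}w$ pointwise. Then the discrete Picone identity,
\[
\mathcal E_+(u)-\sum_n\frac{L_+w}{w}|u|^2=\sum_{\text{edges}}w(n)w(m)\,\Bigl|\frac{u(n)}{w(n)}-\frac{u(m)}{w(m)}\Bigr|^2\ge0,
\]
gives $\Cp\ge A_N$. For $N=1$ the classical choice $w(n)=\sqrt n$ yields $C_+(1)=1/4$. For $N=2$ I would try $w(n)=n_1n_2\,|n|^{-1}$, the lattice version of the continuous ground state $x_1x_2|x|^{-(3N-2)/2+?}$, which for $N=2$ is $x_1x_2/|x|^2\cdot|x|^{1}$. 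If the pointwise inequality fails at finitely many points, I would correct it there, or pass to a product of one-dimensional weights integrated against a suitable measure, using an integral representation $|n|^{-a}=c\int_0^\infty t^{a/2-1}e^{-t|n|^2}\dd t$. The Gaussian factorizes, so each coordinate becomes a one-dimensional computation with the Jacobi-theta-type sums.

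Strict inequality for $N\ge3$ is the main obstacle. Since corner concentration only yields $A_N$, I need a global test function whose Rayleigh quotient is strictly below $A_N$. In the dual cube formulation, I would evaluate it on the explicit critical profile $\Phi(\theta)=\prod_j\sin\theta_j\cdot\omega(\theta)^{-(3N-2)/4}$, suitably truncated, which is the transform of the continuous extremal. The "tangential integration identity" should express (quotient minus $A_N$) as an integral over the cube of a term that integrates by parts along level sets of $\omega$. That term is the defect coming from $4\sin^2(\theta_j/2)$ versus $\theta_j^2$, and from the curvature of those level sets. I would compute its sign directly and show it is negative exactly when $N\ge3$, where the profile has enough decay at the corner for the defect to be finite. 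The strict gap then follows after a limiting argument. Because the essential spectrum begins at $A_N$ (the corner contributes only there), an eigenvalue strictly below is attained.

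For $N=3$, the upper bound $C_+(3)<49/4$ is this strict inequality. For the lower bound $12<C_+(3)$ I would exhibit an explicit supersolution $w$, for instance $w(n)=n_1n_2n_3(|n|^2+c)^{-5/4}$, with $c$ tuned so that $L_+w\ge12|n|^{-2}w$. I would verify this by asymptotic expansion for large $|n|$, where the margin is $49/4-12$, plus a finite check for small $n$. Strictness ($>12$) then comes from the margin together with the fact that the infimum is attained as an eigenvalue with a positive eigenfunction, which cannot make Picone's remainder vanish.
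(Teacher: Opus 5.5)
Your architecture is right: the exact cube formulation, corner concentration for $\Cp\le A_N$, and the profile $h\,a^{-(3N-2)/4}$ for $N\ge3$. The Riemann-sum scaling $u_\lambda(n)=f(n/\lambda)$ is a legitimate substitute for the paper's logarithmic cutoffs as far as $\Cp\le A_N$ goes. However, the two steps that carry the theorem are not supplied.

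\textbf{The strict inequality for $N\ge3$.} You say a tangential identity ``should'' express the defect and that you ``would compute its sign directly''. That computation is the whole difficulty. The pointwise residual $R_N=\frac{6-5N}{4}+\frac{9N^2-4}{16}\tau$ of $\Phi_N$ changes sign: it is negative on the diagonal $\tau=1/N$ and positive as $\tau\to1$. So nothing pointwise decides the sign of $\int_{\Om}R_N\Phi_N^2$.

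The missing idea is Lemma~\ref{lem:tangential}. In the variables $q_j=\sin^2(\theta_j/2)$, the fields $\partial_{q_i}-\partial_{q_j}$ preserve $S$. Integrating $(\partial_{q_i}-\partial_{q_j})(V_{ij}\rho_\gamma)$ with $V_{ij}=q_iq_j(q_i-q_j)/S^2$ yields $\int[5-(3N+2)\tau-\mathcal K]\rho_\gamma\,\mathrm dq=0$. Hence $\int R_N\Phi_N^2=\frac{14-5N}{16}\int\Phi_N^2-\frac{3N-2}{16}\int\mathcal K\Phi_N^2$.

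Your explanation of why $N\ge3$ is special is also off. $\Phi_N\in L^2(\Om)$ and $R_N$ is bounded in every dimension, so the defect is always finite; what changes at $N=3$ is the sign of $14-5N$. Note too that it is $\qform[v_\delta]-A_N\wform[v_\delta]$, not the quotient minus $A_N$, that converges to this defect. That step needs cutoffs with vanishing cost and $\wform[v_\delta]\to\infty$.

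\textbf{The lower bound $C_+(2)\ge4$.} Your lattice Picone route fails as stated. The weight $n_1n_2|n|^{-1}$ is not critical. Since $-\Delta(x_1x_2|x|^s)=-(s^2+4s)|x|^{-2}\,x_1x_2|x|^s$, the coefficient is $3$ at $s=-1$ and $4$ only at $s=-2$. So $L_+w\ge4|n|^{-2}w$ fails at all large $|n|$, not at finitely many points.

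Even the critical candidate $w=n_1n_2|n|^{-2}$ does not work. A fourth-order Taylor expansion gives $L_+w-4|n|^{-2}w=-2|n|^{-4}\sin6\phi+O(|n|^{-6})$, with $\phi$ the polar angle. This is negative at all large $|n|$ with, say, $\pi/12\le\phi\le\pi/8$. For instance, at $n=(10,30)$ one finds $L_+w-4|n|^{-2}w\approx-1.9\times10^{-6}$.

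At the critical constant there is no margin to absorb lattice corrections. Your fallback, unspecified Gaussian superpositions, does not produce a supersolution. The paper stays on the cube instead: the ground-state identity for $\Phi_2=h\,a^{-1}$ has residual exactly $R_2=(q_1-q_2)^2/S^2\ge0$. This gives both the lower bound and non-attainment.

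\textbf{Smaller slips.}
\begin{enumerate}[label=(\roman*)]
\item The first Dirichlet eigenvalue of $\Sigma_N$ is $2N(N-1)$, not $2N$; indeed $((N-2)/2)^2+2N=(N+2)^2/4$.
\item $\Cp^{-1}$ is the \emph{squared} norm of $|n|^{-1}L_+^{-1/2}$.
\item Your candidate $n_1n_2n_3(|n|^2+c)^{-5/4}$ has $L_+w/w\sim\frac{45}{4}|n|^{-2}<12|n|^{-2}$ for every $c$. The critical exponent is $-7/4$.
\item The positive minimizer exists in $H_0^1(\Omega_3)$, not in $\ell^2(\N^3)$, so attainment cannot be fed directly into a lattice Picone remainder. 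The paper instead pairs the cube supersolution $\Phi_*=h\,S^{-3/2}\bigl(1+\frac{13}{54}S-\frac{Q_2}{3S}\bigr)$, whose residual is an explicit sum of nonnegative terms, with the attained eigenfunction $v_3$.
\end{enumerate}
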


The theorem gives a complete classification of equality with the continuous benchmark. Its new higher-dimensional content consists of the exact value in dimension two and the strict decrease in every dimension at least three. The bounds in \eqref{eq:three-bounds} locate the first case of this decrease; the exact value of $C_+(3)$ remains open. The proof of the classification is uniform in dimension and depends on the global identity \eqref{eq:total-defect}.

\subsection{The mechanism and its spectral meaning}

To compare the two coefficients, we use an exact continuous formulation. Set
\begin{equation}\label{eq:cube-symbol}
 \Om=(0,\pi)^N,\qquad
 a(\theta)=4\sum_{j=1}^N\sin^2\frac{\theta_j}{2}.
\end{equation}
On $H_0^1(\Om)$, write
\begin{equation}\label{eq:continuous-forms}
 \qform[v]=\int_{\Om}|\nabla v|^2\dd\theta,
 \qquad
 \wform[v]=\int_{\Om}\frac{|v|^2}{a}\dd\theta.
\end{equation}
The sine transform and a duality argument show that $\Cp$ is the infimum of $\qform/\wform$. This equality preserves the optimal coefficient exactly. Near the corner $\theta=0$, the function $a(\theta)$ agrees with $|\theta|^2$ to leading order, so the weight has the same inverse-square singularity as the continuous benchmark. That local agreement fixes the threshold for functions concentrating at the corner. The behaviour over the whole cube determines whether the infimum lies below it.

The next statement expresses this distinction spectrally. Here $\sigma$ denotes the spectrum and $\sigma_{\ess}$ the essential spectrum. The operator $K_N$ is defined through its closed quadratic form, so its domain includes the Dirichlet conditions. The multiplier $M_{|n|^{-2}}$ acts by $u(n)\mapsto |n|^{-2}u(n)$.

\begin{theorem}\label{thm:spectral-main}
Let $\mathcal H_N=L^2(\Om,a^{-1}\dd\theta)$, and let $K_N$ be the self-adjoint operator associated with the form $\qform$ on $H_0^1(\Om)$. Then
\begin{equation}\label{eq:dual-and-essential}
 \Cp=\inf_{0\ne v\in H_0^1(\Om)}\frac{\qform[v]}{\wform[v]}
       =\inf\sigma(K_N),
 \qquad \sigma_{\ess}(K_N)=[A_N,\infty).
\end{equation}
The infimum is not attained for $N=1,2$. For every $N\geq3$, it is an isolated simple eigenvalue. Its normalized eigenfunction can be chosen strictly positive in $\Om$ and invariant under coordinate permutations.

For $c\in\R$, the discrete operator
\begin{equation}\label{eq:original-schrodinger}
 H_N(c)=L_+-cM_{|n|^{-2}}
\end{equation}
has essential spectrum $[0,4N]$. It has a negative eigenvalue if and only if $c>\Cp$.
\end{theorem}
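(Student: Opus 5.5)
We would prove Theorem~\ref{thm:spectral-main} in four steps: first the duality identity, then the essential spectrum by localization, then the statements about the ground state (taking Theorem~\ref{thm:main} as the input for strictness), and finally the consequences for the discrete operator.

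\emph{Step 1: duality.} Let $S$ be the sine transform $(Su)(\theta)=(2/\pi)^{N/2}\sum_n u(n)\prod_j\sin(n_j\theta_j)$. It is unitary from $\ell^2(\N^N)$ onto $L^2(\Om)$, and it conjugates $L_+$ into multiplication by $a$. Hence $\mathcal E_+(u)=\int a|Su|^2$. Since $\partial_{\theta_j}S u=S(-i n_j\cdot)$ up to a cosine basis, we also get $\mathcal D_+(u)=\langle M_{|n|^{-2}}u,u\rangle=\int |(-\Delta_D)^{-1/2}Su|^2$. The inequality $\mathcal E_+\ge C\mathcal D_+$ therefore reads $a\ge C(-\Delta_D)^{-1}$ as forms on $L^2(\Om)$. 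Since both operators are positive, this is equivalent to $-\Delta_D\ge C a^{-1}$; this is the standard fact $A\ge B>0\iff B^{-1}\ge A^{-1}$. Substituting $w=(-\Delta_D)^{1/2}$ gives exactly $\qform\ge C\wform$ on $H_0^1(\Om)$. Density of $\cfin$ and of $C_c^\infty(\Om)$ in the form cores gives the first equality. The identity $\inf\sigma(K_N)$ is the Rayleigh quotient characterization of the bottom of the spectrum in $\mathcal H_N$. We first have to check that $\wform\le C\qform$, so that $K_N$ is well defined. This follows from the one-dimensional Hardy inequality near each face of the zero set of $a$, which is only the corner $\theta=0$, together with the lower bound $a\ge c|\theta|^2$.

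\emph{Step 2: essential spectrum.} We use an IMS partition of unity separating a small ball at the corner $\theta=0$ from the rest of the cube. Away from the corner the weight $a^{-1}$ is bounded, and $H_0^1$ embeds compactly into $L^2(a^{-1})$ there. This piece contributes only discrete spectrum, up to a compact perturbation of the localization error. Near the corner we write $a=|\theta|^2(1+O(|\theta|^2))$. The local problem is then a relatively small perturbation of the cone Hardy problem on $\mathcal O_N$ near the origin, whose sharp constant is $A_N$ by Section~\ref{sec:duality}. This gives $\inf\sigma_{\ess}\ge A_N-\varepsilon$ for every $\varepsilon$. For the reverse inclusion we build Weyl sequences. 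Take the separated profiles $|\theta|^{-(3N-2)/2+N/2}\cdot\prod_j\theta_j\,\cdot$(angular part) and multiply them by oscillating radial factors $|\theta|^{i s}$, cut off on shrinking dyadic annuli. They have quotients tending to $A_N+s^2$ and converge weakly to zero, so every point of $[A_N,\infty)$ lies in $\sigma_{\ess}$. This corner analysis with the non-flat weight is the main technical step.

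\emph{Step 3: ground state.} For $N\ge3$, Theorem~\ref{thm:main} gives $\Cp<A_N$, so $\Cp$ lies strictly below $\sigma_{\ess}$ and is therefore an isolated eigenvalue of finite multiplicity. The form satisfies $\qform[|v|]=\qform[v]$, and the semigroup is positivity improving because $\Om$ is connected. The Beurling--Deny and Perron--Frobenius arguments then give simplicity and a strictly positive eigenfunction. Coordinate permutations commute with $K_N$, so uniqueness of the ground state forces permutation invariance. For $N=1,2$ we have $\Cp=A_N=\inf\sigma_{\ess}$. Suppose an eigenfunction existed at this value. It would be positive, and we would compare it with the corner profile via a ground-state representation. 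The cleanest route is to show that the tangential defect identity for $N\le2$ has a strictly positive remainder on any $H_0^1$ function. An eigenfunction would make this remainder vanish, which is a contradiction. The delicate point is to check that this remainder is strictly positive and not merely nonnegative.

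\emph{Step 4: the discrete operator.} $H_N(c)$ is $L_+$ plus a compact operator, since $|n|^{-2}\to0$. The spectrum of $L_+$ is $\operatorname{ess\,range}(a)=[0,4N]$, so $\sigma_{\ess}(H_N(c))=[0,4N]$. A negative eigenvalue exists iff there is $u$ with $\mathcal E_+(u)<c\mathcal D_+(u)$, which holds iff $c>\Cp$ by \eqref{eq:C-def}. For the converse direction, any negative spectrum lies below the essential spectrum and so consists of eigenvalues.
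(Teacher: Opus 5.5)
\textbf{Main gap: nonattainment for $N=1,2$.} The tangential identity of Section~\ref{sec:transition} is an identity for integrals against the single density $\Phi_N^2$. It says nothing about an arbitrary $v\in H_0^1(\Om)$, so it cannot give a remainder ``on any $H_0^1$ function''. What you need is a ground-state representation whose potential remainder is pointwise nonnegative, and the choice of comparison function is the whole point. The Euclidean corner profile $\Psi=|\theta|^{-(N-2)/2}Y_N$ fails: $-\Delta\Psi/\Psi=A_N/|\theta|^2$, so Lemma~\ref{lem:ground-state} only gives $\qform[v]-A_N\wform[v]\ge A_N\int(|\theta|^{-2}-a^{-1})|v|^2\dd\theta$. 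The right side is nonpositive because $a\le|\theta|^2$.

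The missing idea is the cube-adapted profile $\Phi_N=h\,a^{-\beta_N}$, with $h=\prod_j\sin\theta_j$ and $\beta_N=(3N-2)/4$. Conjugating by $h$ in the variables $q_j=\sin^2(\theta_j/2)$ gives $-\Delta\Phi_N/\Phi_N=A_N/a+R_N$, with bounded residual $R_N=\frac{6-5N}{4}+\frac{9N^2-4}{16}\tau$. Hence $\qform[v]-A_N\wform[v]\ge\int R_N|v|^2\dd\theta$ on all of $H_0^1(\Om)$. Here $R_1=9/16$, and $R_2=(q_1-q_2)^2/S^2$ vanishes only on the diagonal $\theta_1=\theta_2$, a null set. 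So the right side is strictly positive for every $v\ne0$, and no minimizer exists. The ``delicate point'' you flag is just this null-set observation, but you cannot reach it without computing $R_N$. Positivity of a putative eigenfunction plays no role.

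\textbf{Secondary defect: the Weyl part of Step 2.} Rayleigh quotients converging to $\lambda$ along a weakly null sequence only show $\inf\sigma_{\ess}(K_N)\le\lambda$. To place $\lambda>A_N$ in $\sigma_{\ess}(K_N)$ you need $v_j\in\Dom(K_N)$ with $\|(K_N-\lambda)v_j\|_{\mathcal H_N}\to0$. In the variable $t=-\log|\theta|$, with $v=|\theta|^{-(N-2)/2}w$, the operator becomes $m^{-1}\bigl(-\partial_t^2-\Delta_{\Sigma_N}+(N-2)^2/4\bigr)$, where $m=|\theta|^2/a=1+O(e^{-2t})$. The cutoff must then live on $t$-intervals whose length tends to infinity, as in the paper's $w_j=j^{-1/2}\zeta((t-j^2)/j)e^{ikt}Y_N$. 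On annuli of fixed radius ratio, such as dyadic ones, $\|\zeta'\|_2$ stays bounded below, so the quotients do not even tend to $A_N+s^2$. Your profile exponent should also read $|\theta|^{-(3N-2)/2}\prod_j\theta_j=|\theta|^{-(N-2)/2}Y_N$.

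The remaining steps are sound. Your duality argument via operator monotonicity of inversion is what the paper's argument with $B=M_{a^{-1/2}}T^{-1/2}$ and $\|B\|=\|B^*\|$ proves in this instance. For $N\ge3$, Beurling--Deny with Perron--Frobenius is a legitimate alternative to the paper's modulus, strong maximum principle and sign argument. You still need interior regularity and the strong maximum principle to pass from a.e.\ positivity to positivity at every point of $\Om$.
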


For $N\geq3$, the decrease therefore produces a positive minimizing function in the transformed problem. In the original lattice problem, the same coefficient marks the onset of a negative eigenvalue as the attraction strength increases. These are two consequences of the same variational threshold.

The proof of the decrease rests on an explicit critical profile: a positive function with the corner behaviour of the continuous Hardy problem. Its defect in the transformed differential equation is nonnegative in dimensions one and two. In higher dimensions, derivatives tangent to the level sets of $a$ give an exact identity showing that the integrated defect is negative. Cutting off the corner singularity at a vanishing energy cost turns this sign into a test function with quotient below $A_N$. A further representation on a product of three-dimensional spheres explains the value of the threshold through the local dimension $3N$.

\subsection{Related work, contribution and perspective}

The equality of discrete and continuous Hardy constants has a classical one-dimensional precedent \cite{HLP}. Recent weighted and higher-order results substantially extend that comparison. Gupta \cite{GuptaWeighted} proves sharp weighted inequalities on the half-line for ranges of power exponents. Huang and Ye \cite{HuangYe} obtain sharp discrete Hardy--Rellich inequalities of arbitrary integer order and a discrete Hardy--Leray inequality with the continuous coefficient. Barki \cite{Barki} studies power-weighted discrete $p$-energies: the sharp continuous coefficient is retained for nonnegative weight exponents outside the critical exponent, whereas equality fails for certain negative exponents. These results identify the weight as one parameter governing preservation of the sharp constant.

For multidimensional lattices, Kapitanski and Laptev \cite{KapitanskiLaptev} develop continuous and discrete Hardy inequalities. Gupta \cite{Gupta} proves that the best inverse-square coefficient on the full lattice grows linearly with dimension, whereas the continuous coefficient grows quadratically. Thus failure of equality in sufficiently large dimensions was already known. Recent results of Gupta \cite{GuptaAsymptotic} and Huang and Ye \cite{HuangYeAsymptotic} identify the leading asymptotic coefficient, also for higher-order inequalities. Dyda \cite{Dyda} treats fractional inequalities and characterizes when power-weighted non-fractional inequalities hold, with explicit constants that are not asserted to be optimal.

The exact dimensional transition for the Euclidean inverse-square weight has been established in recent preprints. Alpay \cite{AlpayEarly} proves the sharp result in dimension three. Lizama \cite{LizamaEarly} proves equality in dimensions three and four and strict inequality from dimension nine, and Alpay \cite{Alpay} settles equality in dimensions five through eight. Lizama \cite{LizamaFull} gives a complete dimensional treatment and relates the transition to the total critical defect and the onset of spectral instability. The recent advance is therefore the exact equality range and the mechanism of the transition. The present paper establishes a second dimensional classification, with Dirichlet conditions on all coordinate faces, and identifies its transition by a different global identity adapted to the boundary.

The central new identity is obtained by differentiating tangentially to the level sets of the lattice symbol after a sine transformation. It reduces the integrated critical residual to an explicit dimension-dependent multiple of a positive mass and a nonpositive correction. Its sign gives strict inequality simultaneously in every dimension at least three. The exact continuous formulation, the identification of its essential spectrum, and the product-of-spheres representation explain how the continuous coefficient remains a local threshold while the discrete optimum falls below it. This provides both a classification and an analytic mechanism for the transition in the presence of Dirichlet faces.

A complementary theory optimizes the Hardy weight itself. Keller, Pinchover and Pogorzelski \cite{KPP} construct optimal weights for graph Schr\"odinger operators, and Keller and Lemm \cite{KellerLemm} study their inverse-square asymptotics on Euclidean lattices. On the positive lattice, \cite[Example~4.4]{KPPRellich} gives a weight of the form $\sum_j w(n_j)$, where $w(m)>1/(4m^2)$. This controls a sum of coordinate singularities. Here the prescribed weight is the radial function $1/(\sum_jn_j^2)$, and the scalar multiplying it is optimized. The distinction matters because an optimal graph weight can share a continuous asymptotic coefficient without determining the best scalar for a fixed radial weight.

The continuous literature supplies many further comparison problems. Su and Yang \cite{SuYang} compute the sharp inverse-square coefficient when any specified number of coordinate directions is positive. Ghoussoub and Moradifam \cite{GhoussoubMoradifam} characterize pairs of radial weights through a second-order ordinary differential equation, providing a framework for optimal weighted Hardy and Hardy--Rellich inequalities. Gupta \cite{GuptaAntisymmetric} studies the effect of antisymmetry in Euclidean space, on a torus and on the integer lattice. Antisymmetry imposes vanishing on coordinate-collision sets and changes the admissible class; in the present problem the positive minimizing function is invariant under coordinate permutations. These examples suggest varying weights, geometry and symmetry as separate parameters in the discrete--continuous comparison.

The perspective is therefore to determine, for each specified discretization, the parameter range in which the continuous optimum is preserved and the mechanism that produces a strict decrease. The current theorem supplies a complete answer when the parameter is dimension and all coordinate faces carry Dirichlet conditions. Section~\ref{sec:questions} formulates further comparison questions, while keeping the explicit evaluation of $C_+(3)$ as a separate unresolved problem.

Section~\ref{sec:duality} establishes the exact variational equivalence. Section~\ref{sec:transition} proves the dimensional transition by evaluating the integrated critical residual. Section~\ref{sec:spectrum} identifies the essential spectrum and the discrete stability threshold. Section~\ref{sec:three} proves the strict lower bound in dimension three. Section~\ref{sec:geometry} gives the representation on a product of spheres and explains the effective dimension $3N$.

\section{An exact continuous formulation}\label{sec:duality}

The lattice Laplacian becomes multiplication by $a$ under the sine transform, while multiplication by $|n|^{-2}$ becomes the inverse Dirichlet Laplacian. Exchanging these two positive operators through their quadratic forms gives the continuous quotient. We first control its singular weight so that all operator products used in the argument are well defined.

\subsection{The local Hardy inequality}

Let $\mathcal O_N=(0,\infty)^N$. For $N\geq2$, set
\[
 \Sigma_N=\{\omega\in\mathbb S^{N-1}:\omega_j>0\text{ for all }j\},
 \qquad Y_N(\omega)=\prod_{j=1}^N\omega_j.
\]
The positive function $Y_N$ vanishes on $\partial\Sigma_N$. Since $x_1\cdots x_N$ is harmonic and homogeneous of degree $N$,
\begin{equation}\label{eq:angular-eigenvalue}
 -\Delta_{\mathbb S^{N-1}}Y_N=\mu_NY_N,
 \qquad \mu_N=2N(N-1).
\end{equation}
The ground-state identity on $\Sigma_N$ shows that $\mu_N$ is its first Dirichlet eigenvalue. For $N=1$, we take $\Sigma_1$ to be a single point, $Y_1=1$, and $\mu_1=0$.

This angular calculation yields the continuous estimate needed throughout the paper.

\begin{lemma}\label{lem:local-hardy}
For every $v\in H_0^1(\Om)$,
\begin{equation}\label{eq:local-hardy}
 \int_{\Om}|\nabla v|^2\dd\theta
 \geq A_N\int_{\Om}\frac{|v|^2}{|\theta|^2}\dd\theta.
\end{equation}
Consequently,
\begin{equation}\label{eq:weight-control}
 \wform[v]\leq\frac{\pi^2}{4A_N}\qform[v].
\end{equation}
\end{lemma}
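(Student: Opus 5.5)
Since $\Om=(0,\pi)^N\subset\mathcal O_N$, extending $v\in H_0^1(\Om)$ by zero gives an element of $H_0^1(\mathcal O_N)$ with the same Dirichlet integral. So \eqref{eq:local-hardy} follows from the Hardy inequality on the positive cone with constant $A_N$, proved for $f\in C_c^\infty(\mathcal O_N)$ and extended by density. I will prove the cone inequality directly in polar coordinates $x=r\omega$, $r>0$, $\omega\in\Sigma_N$. There
\[
|\nabla f|^2=|\partial_rf|^2+r^{-2}|\nabla_{\mathbb S^{N-1}}f|^2,
\qquad
\dd x=r^{N-1}\dd r\,\dd\omega .
\]
The argument has a radial part and an angular part.

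For the angular part, for each fixed $r$ the function $f(r,\cdot)$ is compactly supported in $\Sigma_N$. Since $\mu_N$ is the first Dirichlet eigenvalue of $\Sigma_N$ by \eqref{eq:angular-eigenvalue} and the ground-state identity, we get $\int_{\Sigma_N}|\nabla_{\mathbb S}f|^2\geq\mu_N\int_{\Sigma_N}|f|^2$. If one prefers to avoid citing the spectral statement, write $f=Y_Ng$ and integrate by parts. This gives $\int|\nabla_{\mathbb S}f|^2=\mu_N\int|f|^2+\int Y_N^2|\nabla_{\mathbb S}g|^2$, with no boundary terms because $g$ is compactly supported. For $N=1$ this part is vacuous, since $\mu_1=0$.

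For the radial part, the one-dimensional weighted Hardy inequality for $\phi\in C_c^\infty(0,\infty)$ reads
\[
\int_0^\infty|\phi'|^2r^{N-1}\dd r\geq\frac{(N-2)^2}{4}\int_0^\infty|\phi|^2r^{N-3}\dd r .
\]
It follows from the substitution $\phi=r^{-(N-2)/2}\psi$ and an integration by parts. Apply it to $f(\cdot,\omega)$ for each $\omega$, then integrate over $\omega$. Adding the two parts gives the constant
\[
\frac{(N-2)^2}{4}+2N(N-1)=\frac{N^2-4N+4+8N^2-8N}{4}=\frac{(3N-2)^2}{4}=A_N,
\]
and this proves \eqref{eq:local-hardy}. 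For $N=1$ the formula gives $1/4$, the classical half-line Hardy constant, which is consistent.

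For \eqref{eq:weight-control} it suffices to show $a(\theta)\geq\frac{4}{\pi^2}|\theta|^2$ on $\Om$. This is termwise Jordan's inequality: for $t=\theta_j/2\in(0,\pi/2)$ we have $\sin t\geq 2t/\pi$, so $4\sin^2(\theta_j/2)\geq 4\theta_j^2/\pi^2$. Summing over $j$ gives $a^{-1}\leq\frac{\pi^2}{4}|\theta|^{-2}$, and combining this with \eqref{eq:local-hardy} yields $\wform[v]\leq\frac{\pi^2}{4A_N}\qform[v]$.

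The main obstacle is making the angular step rigorous, namely that $\mu_N$ is the bottom of the Dirichlet spectrum on $\Sigma_N$. The ground-state substitution $f=Y_Ng$ handles it cleanly for compactly supported smooth $f$. The remaining points are routine: density of $C_c^\infty(\Om)$ in $H_0^1(\Om)$, and Fatou's lemma or monotone convergence for the right-hand side.
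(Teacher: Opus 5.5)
Your proposal is correct and follows essentially the same route as the paper: zero extension to the cone, the polar decomposition with the ground-state substitution $f=Y_Ng$ on $\Sigma_N$, the radial identity via $r^{-(N-2)/2}$, the constant $(N-2)^2/4+\mu_N=A_N$, and then the Jordan-type bound $\sin(t/2)\geq t/\pi$. The only cosmetic difference is in passing from smooth functions to $H_0^1$: you use Fatou's lemma on an a.e.\ convergent subsequence, whereas the paper shows that $v_m/r$ is Cauchy in $L^2$.
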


\begin{proof}
We first explain the angular contribution. For $N\geq2$, the polar-coordinate formula for the Laplacian gives
\[
 \Delta(r^NY_N(\omega))
 =r^{N-2}\bigl[N(2N-2)Y_N+\Delta_{\Sigma_N}Y_N\bigr].
\]
The left-hand side is zero because $r^NY_N=x_1\cdots x_N$ is harmonic. This proves \eqref{eq:angular-eigenvalue}. To identify the first eigenvalue, let $\psi\in C_c^\infty(\Sigma_N)$ and write $\psi=Y_N f$. Expanding the square and integrating the cross term yields
\[
 \int_{\Sigma_N}|\nabla_\omega\psi|^2\dd\omega
 -\mu_N\int_{\Sigma_N}|\psi|^2\dd\omega
 =\int_{\Sigma_N}Y_N^2|\nabla_\omega f|^2\dd\omega\geq0.
\]
The inequality extends to $H_0^1(\Sigma_N)$ by density. Since $Y_N$ itself belongs to that space and solves the eigenvalue equation, the first eigenvalue is exactly $\mu_N$.

Now let $v\in C_c^\infty(\mathcal O_N)$, and put $b=(N-2)/2$. For $g\in C_c^\infty(0,\infty)$, expansion gives
\[
 \int_0^\infty |(r^bg)'|^2r\dd r
 =\int_0^\infty |g'|^2r^{N-1}\dd r
 +b^2\int_0^\infty |g|^2r^{N-3}\dd r
 +b\int_0^\infty (|g|^2)'r^{N-2}\dd r.
\]
The last integral is $-2b\int_0^\infty |g|^2r^{N-3}\dd r$, so
\begin{equation}\label{eq:radial-hardy-identity}
 \int_0^\infty |g'|^2r^{N-1}\dd r
 -b^2\int_0^\infty |g|^2r^{N-3}\dd r
 =\int_0^\infty |(r^bg)'|^2r\dd r\geq0.
\end{equation}
Apply this identity at each angular point, and apply the angular inequality at each radius. The polar decomposition of the energy then gives
\begin{align*}
 \int_{\mathcal O_N}|\nabla v|^2\dd x
 &=\int_0^\infty\int_{\Sigma_N}
 \bigl(|\partial_rv|^2+r^{-2}|\nabla_\omega v|^2\bigr)r^{N-1}\dd\omega\dd r\\
 &\geq (b^2+\mu_N)\int_{\mathcal O_N}\frac{|v|^2}{r^2}\dd x.
\end{align*}
Here $b^2+\mu_N=(3N-2)^2/4=A_N$. When $N=1$, there is no angular integral and \eqref{eq:radial-hardy-identity} gives the same conclusion with $b=-1/2$ and $\mu_1=0$.

Extend $v\in C_c^\infty(\Om)$ by zero to $\mathcal O_N$. This proves \eqref{eq:local-hardy} on that dense subspace. If $v_m\to v$ in $H_0^1(\Om)$, applying the inequality to $v_m-v_l$ shows that $v_m/r$ is Cauchy in $L^2$. Its limit is $v/r$, as follows by taking an almost-everywhere convergent subsequence. Thus the inequality also holds for $v\in H_0^1(\Om)$.

Finally, concavity of the sine function on $[0,\pi/2]$ gives $\sin(t/2)\geq t/\pi$ for $0\leq t\leq\pi$, while $\sin(t/2)\leq t/2$. Summing their squares gives
\begin{equation}\label{eq:a-comparison}
 \frac4{\pi^2}|\theta|^2\leq a(\theta)\leq|\theta|^2.
\end{equation}
Consequently $a^{-1}\leq\pi^2/(4|\theta|^2)$, and \eqref{eq:weight-control} follows from \eqref{eq:local-hardy}.
\end{proof}

\subsection{Sine transform and form duality}

Let $T=-\Delta_D$ be the Dirichlet Laplacian on $L^2(\Om)$. Its form domain is $H_0^1(\Om)$, and its eigenvalues in the product sine basis are $|n|^2$. Define the unitary sine transform by
\begin{equation}\label{eq:sine-transform}
 (\mathcal Su)(\theta)=\left(\frac2\pi\right)^{N/2}
 \sum_{n\in\N^N}u(n)\prod_{j=1}^N\sin(n_j\theta_j).
\end{equation}
The functions $(2/\pi)^{N/2}\prod_j\sin(n_j\theta_j)$ form an orthonormal basis of $L^2(\Om)$, so $\mathcal S$ extends from finite sums to a unitary map. The identity
\[
 \sin((n+1)t)+\sin((n-1)t)=2\cos t\sin(nt),\qquad \sin(0t)=0,
\]
accounts for the zero boundary value in each coordinate. Since
$2-2\cos t=4\sin^2(t/2)$, it gives the first identity below. The second follows because the product sine function indexed by $n$ is a Dirichlet eigenfunction with eigenvalue $|n|^2$:
\begin{equation}\label{eq:transformed-operators}
 \mathcal SL_+\mathcal S^*=M_a,
 \qquad
 \mathcal SM_{|n|^{-2}}\mathcal S^*=T^{-1}.
\end{equation}
Both identities hold first on finite sums and then everywhere by boundedness. In particular, no inverse of the multiplier $a$ has been used at this stage.

The following proposition handles the unbounded multiplier $a^{-1/2}$ explicitly.

\begin{proposition}\label{prop:duality}
The operator
\[
 B=M_{a^{-1/2}}T^{-1/2}:L^2(\Om)\longrightarrow L^2(\Om)
\]
is bounded, and
\begin{equation}\label{eq:duality-norm}
 \Cp=\|B\|^{-2}
 =\inf_{0\ne v\in H_0^1(\Om)}\frac{\qform[v]}{\wform[v]}.
\end{equation}
\end{proposition}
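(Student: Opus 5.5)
Boundedness of $B$ comes first. Since $T^{-1/2}$ maps $L^2(\Om)$ onto the form domain $H_0^1(\Om)$ with $\qform[T^{-1/2}g]=\|g\|^2$, put $v=T^{-1/2}g$. The estimate \eqref{eq:weight-control} of Lemma~\ref{lem:local-hardy} then gives
\[
\|Bg\|^2=\wform[v]\le \frac{\pi^2}{4A_N}\qform[v]=\frac{\pi^2}{4A_N}\|g\|^2 .
\]
Hence $B$ is bounded. As $g$ ranges over $L^2(\Om)\setminus\{0\}$, $v$ ranges over $H_0^1(\Om)\setminus\{0\}$, so
\[
\|B\|^2=\sup_{0\ne v\in H_0^1(\Om)}\frac{\wform[v]}{\qform[v]} .
\]
This already proves the second equality in \eqref{eq:duality-norm}.

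It remains to show $\Cp^{-1}=\|B\|^2=\|B^*\|^2=\|BB^*\|$. Transport the discrete quotient by $\mathcal S$ using \eqref{eq:transformed-operators}. For $u\in\cfin(\N^N)$ set $w=\mathcal Su$, a finite sine sum. Then $\mathcal E_+(u)=\langle aw,w\rangle=\|a^{1/2}w\|^2$ and $\mathcal D_+(u)=\langle T^{-1}w,w\rangle=\|T^{-1/2}w\|^2$. Put $h=a^{1/2}w$. Formally $B^*h=T^{-1/2}a^{-1/2}h=T^{-1/2}w$, so
\[
\frac{\mathcal D_+(u)}{\mathcal E_+(u)}=\frac{\|B^*h\|^2}{\|h\|^2}.
\]
To justify this without inverting $a$, I would verify $\langle Bg,h\rangle=\langle T^{-1/2}g,a^{-1/2}h\rangle=\langle g,T^{-1/2}w\rangle$ for all $g$. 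This holds because $a^{-1/2}h=w\in L^2$. So $h$ lies in the domain where $B^*h=T^{-1/2}w$ honestly. Taking the supremum over $u$ gives $\Cp^{-1}=\sup\|B^*h\|^2/\|h\|^2$ over the set $\mathcal V=\{a^{1/2}w: w \text{ a finite sine sum}\}$.

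The last step is density: I need $\mathcal V$ to be dense in $L^2(\Om)$, so that by continuity of $B^*$ the supremum over $\mathcal V$ equals $\|B^*\|^2=\|B\|^2$. I expect this to be the main (though mild) obstacle. Suppose $f\perp\mathcal V$. Then $a^{1/2}f\in L^2$, since $a$ is bounded, and $a^{1/2}f$ is orthogonal to every finite sine sum, hence to a basis. So $a^{1/2}f=0$. Since $a>0$ almost everywhere, $f=0$.

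Combining the steps gives $\Cp=\|B\|^{-2}$. Note also that $\Cp>0$ follows from the bound $\|B\|^2\le\pi^2/(4A_N)$, so the inverse is meaningful.
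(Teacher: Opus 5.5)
Your proposal is correct and follows essentially the same route as the paper. It gets boundedness from $v=T^{-1/2}g$ and \eqref{eq:weight-control}, obtains $B^*(a^{1/2}w)=T^{-1/2}w$ by cancelling the multipliers inside the inner product, and uses density of the range of $M_{a^{1/2}}$ because $a>0$ almost everywhere. The only differences are cosmetic: you prove density of $a^{1/2}$ times finite sine sums directly and state the result as a supremum, whereas the paper first extends $c\|T^{-1/2}f\|_2^2\le\|a^{1/2}f\|_2^2$ to all $f\in L^2(\Om)$ and then finds the largest admissible $c$.
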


\begin{proof}
There are two points to check: the product defining $B$ is bounded despite the singular multiplier, and exchanging the two operators preserves the best coefficient.

For $g\in L^2(\Om)$ set $v=T^{-1/2}g$. The spectral expansion in the sine basis gives $v\in H_0^1(\Om)$ and
\[
 \qform[v]=\|g\|_2^2.
\]
Moreover every $v\in H_0^1(\Om)$ arises this way, with $g=T^{1/2}v$. Lemma~\ref{lem:local-hardy} therefore implies
\[
 \|M_{a^{-1/2}}T^{-1/2}g\|_2^2
 =\wform[v]\leq\frac{\pi^2}{4A_N}\|g\|_2^2.
\]
This proves that the composition is defined for every $g$ and is bounded. Taking the supremum over $g$ also proves
\begin{equation}\label{eq:B-norm}
 \|B\|^2=\sup_{0\ne v\in H_0^1(\Om)}\frac{\wform[v]}{\qform[v]}.
\end{equation}

Fix $c>0$. By \eqref{eq:transformed-operators}, the inequality $\mathcal E_+(u)\geq c\mathcal D_+(u)$ is equivalent, on finite sine sums, to
\begin{equation}\label{eq:bounded-form-inequality}
 c\|T^{-1/2}f\|_2^2\leq\|a^{1/2}f\|_2^2.
\end{equation}
The operators $T^{-1/2}$ and $M_{a^{1/2}}$ are bounded, so the inequality holds on finite sums if and only if it holds for every $f\in L^2(\Om)$.

For $f,g\in L^2(\Om)$, cancellation of the multipliers inside the inner product gives
\[
 \langle Bg,a^{1/2}f\rangle
 =\langle T^{-1/2}g,f\rangle
 =\langle g,T^{-1/2}f\rangle.
\]
Thus $B^*(a^{1/2}f)=T^{-1/2}f$. The range of $M_{a^{1/2}}$ is dense: a function orthogonal to that range satisfies $a^{1/2}g=0$, and hence is zero since $a>0$ almost everywhere. It follows that \eqref{eq:bounded-form-inequality} is equivalent to
\[
 c\|B^*z\|_2^2\leq\|z\|_2^2
 \quad\text{for every }z\in L^2(\Om),
\]
first on the dense range and then by continuity. This holds exactly when $c\|B^*\|^2\leq1$. Since $\|B^*\|=\|B\|>0$, the largest admissible $c$ is $\|B\|^{-2}$. Combining this fact with \eqref{eq:B-norm} proves both equalities in \eqref{eq:duality-norm}.
\end{proof}

We record why this quotient defines a self-adjoint operator in the weighted space $\mathcal H_N=L^2(\Om,a^{-1}\dd\theta)$. Smooth functions supported in the interior are dense in $\mathcal H_N$: first truncate a weighted square-integrable function to compact subsets of $\Om$, and then use ordinary smooth approximation where the weight is bounded above and below. The estimate \eqref{eq:weight-control} embeds $H_0^1(\Om)$ continuously into $\mathcal H_N$. Together with Poincar\'e's inequality, it makes the norm
\[
 \bigl(\qform[v]+\|v\|_{\mathcal H_N}^2\bigr)^{1/2}
\]
equivalent to the usual $H_0^1$ norm. The form is therefore densely defined and closed. The representation theorem for closed nonnegative forms gives its self-adjoint operator $K_N$; we use the standard form and variational framework described in \cite{ReedSimon}. A function $v\in H_0^1(\Om)$ belongs to $\Dom(K_N)$ precisely when there is $f\in\mathcal H_N$ such that
\begin{equation}\label{eq:K-domain}
 \int_{\Om}\nabla v\cdot\overline{\nabla\psi}\dd\theta
 =\int_{\Om}\frac{f\overline\psi}{a}\dd\theta
 \qquad(\psi\in H_0^1(\Om)).
\end{equation}
In this case $K_Nv=f$. Thus $K_N=-a\Delta$ in the interior, and its eigenvalue equation is
\begin{equation}\label{eq:dual-euler}
                         -\Delta v=\frac\lambda a v.
\end{equation}

\section{The critical profile and the dimensional transition}\label{sec:transition}

The critical continuum profile identifies the coefficient $A_N$ but does not by itself decide the lattice inequality. In the cube formulation, its residual separates these two roles. We compute the residual pointwise, then use a tangential integration identity to determine its total sign.

\subsection{Conjugation by the Dirichlet ground state}

Introduce the coordinates and their first two power sums
\begin{equation}\label{eq:q-coordinates}
 q_j=\sin^2\frac{\theta_j}{2},\qquad
 S=\sum_{j=1}^Nq_j,\qquad Q_2=\sum_{j=1}^Nq_j^2,
 \qquad \tau=\frac{Q_2}{S^2}.
\end{equation}
Thus $a=4S$, $0<q_j<1$, and $1/N\leq\tau\leq1$. The function $h=\prod_j\sin\theta_j$ satisfies $-\Delta h=Nh$. Conjugating by $h$ gives the differential operator
\begin{equation}\label{eq:Jacobi}
 \mathcal J_N=\sum_{j=1}^N
 \left[q_j(1-q_j)\partial_j^2+
       \left(\frac32-3q_j\right)\partial_j\right].
\end{equation}
For completeness, the derivatives of one coordinate are
\[
 \frac{\mathrm dq_j}{\mathrm d\theta_j}=\frac12\sin\theta_j
 =\sqrt{q_j(1-q_j)},\qquad
 \frac{\mathrm d^2q_j}{\mathrm d\theta_j^2}=\frac12-q_j.
\]
The product rule gives
\[
 h^{-1}\Delta(hf)=-Nf+\Delta f
                  +2\sum_j\cot\theta_j\,\partial_{\theta_j}f.
\]
In the $q$ coordinates the second derivative contributes
$q_j(1-q_j)\partial_j^2+(1/2-q_j)\partial_j$, and the last term contributes
$(1-2q_j)\partial_j$. Adding them yields
\begin{equation}\label{eq:conjugation}
 -\Delta_\theta(hf(q))=h(N-\mathcal J_N)f(q).
\end{equation}
In particular,
\begin{equation}\label{eq:J-radial}
 \mathcal J_N F(S)=(S-Q_2)F''(S)
                    +\left(\frac{3N}{2}-3S\right)F'(S).
\end{equation}

The critical exponent makes the coefficient of $a^{-1}$ equal to its continuum value.

\begin{lemma}\label{lem:critical-residual}
Let
\begin{equation}\label{eq:critical-profile}
 \beta_N=\frac{3N-2}{4},\qquad
 \Phi_N(\theta)=h(\theta)a(\theta)^{-\beta_N}.
\end{equation}
Then $\Phi_N>0$ in $\Om$ and
\begin{equation}\label{eq:critical-residual}
 \frac{-\Delta\Phi_N}{\Phi_N}=\frac{A_N}{a}+R_N,
 \qquad
 R_N=\frac{6-5N}{4}+\frac{9N^2-4}{16}\tau.
\end{equation}
Moreover, $R_N$ is bounded and $\Phi_N\in L^2(\Om)$.
\end{lemma}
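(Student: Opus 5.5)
The plan is to compute $-\Delta\Phi_N/\Phi_N$ with the conjugation identity \eqref{eq:conjugation} rather than differentiating in $\theta$ directly. Since $a=4S$, we have $\Phi_N=h\,F(S)$ with $F(S)=4^{-\beta}S^{-\beta}$ and $\beta=\beta_N$. Identity \eqref{eq:conjugation} gives $-\Delta\Phi_N=h\,(N-\mathcal J_N)F$. Dividing by $\Phi_N=hF$, which is legitimate because $h>0$ and $S>0$ in $\Om$, yields
\[
 \frac{-\Delta\Phi_N}{\Phi_N}=N-\frac{\mathcal J_NF}{F}.
\]
All functions involved are smooth in the open cube, so this is a pointwise identity there.

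Next I evaluate $\mathcal J_NF/F$ with the radial formula \eqref{eq:J-radial}. The derivatives are $F'/F=-\beta S^{-1}$ and $F''/F=\beta(\beta+1)S^{-2}$. Writing $Q_2=\tau S^2$ gives
\[
 \frac{\mathcal J_NF}{F}=\frac{\beta(\beta+1)}{S}-\beta(\beta+1)\tau-\frac{3N\beta}{2S}+3\beta.
\]
Hence $-\Delta\Phi_N/\Phi_N$ equals $\beta\bigl(\tfrac{3N}{2}-\beta-1\bigr)S^{-1}+(N-3\beta)+\beta(\beta+1)\tau$. The key algebraic step is the choice $\beta=(3N-2)/4$, which gives $\tfrac{3N}{2}-\beta-1=\beta$. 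The coefficient of $S^{-1}$ is therefore $\beta^2$, and since $S=a/4$ this term is $4\beta^2/a=A_N/a$ by \eqref{eq:A-def}. The remaining terms are $N-3\beta=(6-5N)/4$ and $\beta(\beta+1)=(3N-2)(3N+2)/16=(9N^2-4)/16$, which is exactly $R_N$. For $N=1$ the same computation applies with $Q_2=q_1^2$ and $\tau\equiv1$.

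For the remaining claims, positivity of $\Phi_N$ is immediate because $\sin\theta_j>0$ on $(0,\pi)$ and $a>0$ there. Boundedness of $R_N$ follows from $1/N\le\tau\le1$. For $\Phi_N\in L^2(\Om)$, note that by \eqref{eq:a-comparison} the only singularity of $a^{-\beta}$ in the closed cube is at the corner $\theta=0$, since $a\ge 4|\theta|^2/\pi^2$. Away from a neighbourhood of that corner, $\Phi_N$ is bounded. Near the corner I use $h\le\prod_j\theta_j\le|\theta|^N$ together with $a^{-\beta}\le(\pi^2/4)^\beta|\theta|^{-(3N-2)/2}$. This gives $\Phi_N^2\lesssim|\theta|^{2N-(3N-2)}=|\theta|^{2-N}$, which is integrable in $N$ dimensions because $2-N>-N$.

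The only delicate point is the exact bookkeeping of the $S^{-1}$ coefficient, namely the cancellation $\tfrac{3N}{2}-\beta-1=\beta$; everything else is routine.
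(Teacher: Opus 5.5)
Your proposal is correct and follows the paper's proof: the same conjugation by $h$, the same evaluation of $\mathcal J_N S^{-\beta}$ via \eqref{eq:J-radial}, and the same cancellation $3N/2-\beta_N-1=\beta_N$. For square integrability you use the cruder global bounds $h\le|\theta|^N$ and $a\ge 4|\theta|^2/\pi^2$, which suffice for the lemma. The paper instead derives the sharp corner asymptotics \eqref{eq:critical-asymptotic} and the two-sided mass bounds \eqref{eq:critical-mass-bounds}, whose lower half is needed later in Lemma~\ref{lem:cutoffs}.
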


\begin{proof}
Let $\beta\in\R$. Since
$(S^{-\beta})'=-\beta S^{-\beta-1}$ and
$(S^{-\beta})''=\beta(\beta+1)S^{-\beta-2}$,
formula \eqref{eq:J-radial} gives
\begin{align*}
 \frac{(N-\mathcal J_N)S^{-\beta}}{S^{-\beta}}
 &=N-\beta(\beta+1)\frac{S-Q_2}{S^2}
       +\frac{\beta}{S}\left(\frac{3N}{2}-3S\right)\\
 &=\frac{\beta(3N/2-\beta-1)}{S}
       +N-3\beta+\beta(\beta+1)\tau.
\end{align*}
At $\beta=\beta_N$, we have $3N/2-\beta_N-1=\beta_N$ and
\[
 4\beta_N^2=A_N,\qquad
 N-3\beta_N=\frac{6-5N}{4},\qquad
 \beta_N(\beta_N+1)=\frac{9N^2-4}{16}.
\]
Use $a=4S$ and \eqref{eq:conjugation} to obtain \eqref{eq:critical-residual}. The inequalities $Q_2\leq S^2\leq NQ_2$ give $1/N\leq\tau\leq1$, and hence boundedness of $R_N$. Positivity of $\Phi_N$ follows from positivity of every sine factor and of $a$ in $\Om$.

We make the corner estimate explicit. Let $r=|\theta|$ and $\omega=\theta/r$. Taylor's formula, uniformly for $\omega\in\Sigma_N$, gives
\[
 a(r\omega)=r^2\bigl(1+O(r^2)\bigr),\qquad
 h(r\omega)=r^NY_N(\omega)\prod_j\frac{\sin(r\omega_j)}{r\omega_j}
 =r^NY_N(\omega)\bigl(1+O(r^2)\bigr).
\]
The quotient $\sin t/t$ is interpreted as $1$ at $t=0$, so the estimate remains uniform up to the angular boundary. Since $N-2\beta_N=-(N-2)/2$, it follows that
\begin{equation}\label{eq:critical-asymptotic}
 \Phi_N(r\omega)
 =r^{-(N-2)/2}Y_N(\omega)\bigl(1+O(r^2)\bigr).
\end{equation}
In particular, there are $c,C,r_0>0$ such that
\begin{equation}\label{eq:critical-mass-bounds}
 cr\leq\int_{\Sigma_N}\Phi_N(r\omega)^2r^{N-1}\dd\omega\leq Cr
 \quad(0<r<r_0).
\end{equation}
For $N=1$, use the one-point angular measure of mass one. Integrating the upper bound over $r$ proves square integrability near the corner. Away from the corner, $a$ is bounded below and $h$ is smooth up to the boundary. This completes the proof.
\end{proof}

The next identity converts a positive comparison function into an energy inequality. We give the elementary proof, including the extension that will be used in the first two dimensions. More general ground-state representations are developed in \cite{FrankSeiringer}.

\begin{lemma}\label{lem:ground-state}
Let $\Phi\in C^2(\Om)$ be positive and set $V=-\Delta\Phi/\Phi$. For every $v\in C_c^\infty(\Om)$,
\begin{equation}\label{eq:ground-state}
 \int_{\Om}|\nabla v|^2\dd\theta-
 \int_{\Om}V|v|^2\dd\theta
 =\int_{\Om}\Phi^2\left|\nabla\left(\frac v\Phi\right)\right|^2\dd\theta.
\end{equation}
If $V=c/a+R$ with $c\in\R$ and $R\in L^\infty(\Om)$, then
\begin{equation}\label{eq:ground-state-extension}
 \qform[v]-c\wform[v]\geq\int_{\Om}R|v|^2\dd\theta
 \quad(v\in H_0^1(\Om)).
\end{equation}
\end{lemma}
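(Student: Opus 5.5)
The plan has two parts: first the ground-state identity \eqref{eq:ground-state} for smooth compactly supported $v$, then a density argument to reach \eqref{eq:ground-state-extension}.

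\textbf{Step 1: the identity.} Fix $v\in C_c^\infty(\Om)$ and write $v=\Phi w$ with $w=v/\Phi$. Since $\Phi$ is $C^2$ and positive on the compact set $\supp v$, we have $w\in C_c^2(\Om)$. Expanding
\[
|\nabla(\Phi w)|^2=\Phi^2|\nabla w|^2+|w|^2|\nabla\Phi|^2+\Phi\,\nabla\Phi\cdot\nabla|w|^2
\]
and writing $|w|^2|\nabla\Phi|^2+\Phi\nabla\Phi\cdot\nabla|w|^2=\nabla\Phi\cdot\nabla(\Phi|w|^2)$, I integrate by parts once. There are no boundary terms because of the compact support. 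This gives $\int(-\Delta\Phi)\Phi|w|^2=\int V|v|^2$, which is \eqref{eq:ground-state}. For complex $v$ the same computation works with $|w|^2$ and real $\Phi$, and it suffices to treat real and imaginary parts separately.

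\textbf{Step 2: the inequality on $C_c^\infty(\Om)$.} Since $V=c/a+R$, \eqref{eq:ground-state} gives $\qform[v]-c\wform[v]-\int R|v|^2=\int\Phi^2|\nabla(v/\Phi)|^2\geq0$. Every term here is finite: $\wform[v]$ is finite by \eqref{eq:weight-control}, and $R$ is bounded.

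\textbf{Step 3: extension to $H_0^1(\Om)$.} Take $v_m\in C_c^\infty(\Om)$ with $v_m\to v$ in $H_0^1$. Then $\qform[v_m]\to\qform[v]$. Also $\int R|v_m|^2\to\int R|v|^2$, because $R\in L^\infty$ and $v_m\to v$ in $L^2$. For the singular weight, \eqref{eq:weight-control} applied to $v_m-v$ shows that $a^{-1/2}v_m\to a^{-1/2}v$ in $L^2$, so $\wform[v_m]\to\wform[v]$. This step works for any sign of $c$. Passing to the limit in the inequality of Step 2 gives \eqref{eq:ground-state-extension}.

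The only delicate point is this continuity of $\wform$ in the $H_0^1$ norm, and Lemma~\ref{lem:local-hardy} supplies it. Nothing is claimed about the right side $\int\Phi^2|\nabla(v/\Phi)|^2$ for general $v$; only its nonnegativity on the dense class is used.
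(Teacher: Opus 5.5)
Your proposal is correct and follows the paper's proof essentially step for step. You use the same expansion of $|\nabla(\Phi w)|^2$, with the cross terms combined into $\nabla\Phi\cdot\nabla(\Phi|w|^2)$ and integrated by parts without boundary terms. You then use the same density argument: the Dirichlet energies converge, the weighted norms converge by the weight control from Lemma~\ref{lem:local-hardy}, and the $R$-integrals converge by $R\in L^\infty$ and $L^2$ convergence.
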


\begin{proof}
Write $v=\Phi f$. Since $f$ has compact support in the interior,
\[
 |\nabla(\Phi f)|^2
 =\Phi^2|\nabla f|^2+|f|^2|\nabla\Phi|^2
       +\Phi\nabla\Phi\cdot\nabla|f|^2.
\]
The last two terms together are $\nabla\Phi\cdot\nabla(\Phi|f|^2)$. Their integral equals
$-\int\Delta\Phi\,\Phi|f|^2=\int V|v|^2$, with no boundary term. This proves \eqref{eq:ground-state}, also for complex $v$.

Drop the nonnegative right-hand side and take $v_m\in C_c^\infty(\Om)$ converging to $v$ in $H_0^1$. The Dirichlet energies converge, and so do the weighted norms by \eqref{eq:weight-control}. Since $R$ is bounded and $v_m\to v$ in $L^2$, the integrals against $R$ converge as well. Passing to the limit gives \eqref{eq:ground-state-extension}.
\end{proof}

\subsection{Logarithmic cutoffs and the continuum upper bound}

The profile $\Phi_N$ has finite unweighted mass and infinite weighted mass. This permits an arbitrarily small cutoff cost while retaining the sign of its integrated residual.

\begin{lemma}\label{lem:cutoffs}
For every $N\geq1$ there are functions $v_\delta\in H_0^1(\Om)$, $0<\delta<1$, such that
\begin{align}
 \wform[v_\delta]&\longrightarrow\infty,\label{eq:cutoff-mass}\\
 \qform[v_\delta]-A_N\wform[v_\delta]
 &\longrightarrow\int_{\Om}R_N\Phi_N^2\dd\theta.
 \label{eq:cutoff-defect}
\end{align}
Consequently $\Cp\leq A_N$ for every $N$.
\end{lemma}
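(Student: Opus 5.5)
We will take $v_\delta=\eta_\delta\Phi_N$, where $\eta_\delta$ is a logarithmic cutoff at the corner $\theta=0$. Concretely, let $r=|\theta|$ and set $\eta_\delta=0$ for $r\le\delta^2$, $\eta_\delta=\log(r/\delta^2)/\log(1/\delta)$ for $\delta^2\le r\le\delta$, and $\eta_\delta=1$ for $r\ge\delta$. Since $\Phi_N$ contains the factor $h$, it vanishes on the faces of $\Om$ and is smooth up to $\partial\Om$ away from the corner. Its only possible singular boundary point is $\theta=0$; at the other vertices $a$ is bounded below. Hence $v_\delta$ is Lipschitz with zero trace, so $v_\delta\in H_0^1(\Om)$.

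\emph{Divergence of the weighted mass.} By \eqref{eq:a-comparison}, $a\le r^2$, so the lower bound in \eqref{eq:critical-mass-bounds} gives
\[
 \wform[v_\delta]\ge\int_\delta^{r_0}\frac{c\,r}{r^2}\dd r=c\log(r_0/\delta)\to\infty .
\]
This proves \eqref{eq:cutoff-mass}.

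\emph{The defect.} We apply the identity \eqref{eq:ground-state} with $v=\eta\Phi_N$, so that $v/\Phi_N=\eta$. Applying it literally requires $v\in C_c^\infty$. We will instead redo the integration by parts directly, since $\eta_\delta\Phi_N$ vanishes near the corner and the boundary terms on the faces vanish because $\Phi_N=0$ there. This yields
\[
 \qform[v_\delta]-\int_{\Om}\Bigl(\frac{A_N}{a}+R_N\Bigr)\eta_\delta^2\Phi_N^2\dd\theta=\int_{\Om}\Phi_N^2|\nabla\eta_\delta|^2\dd\theta .
\]
Alternatively, one can approximate by functions vanishing near the faces and pass to the limit using \eqref{eq:weight-control}. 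Rearranging,
\[
 \qform[v_\delta]-A_N\wform[v_\delta]=\int_{\Om}R_N\eta_\delta^2\Phi_N^2\dd\theta+\int_{\Om}\Phi_N^2|\nabla\eta_\delta|^2\dd\theta .
\]
The first term tends to $\int_{\Om}R_N\Phi_N^2\dd\theta$ by dominated convergence, because $R_N$ is bounded and $\Phi_N\in L^2(\Om)$ by Lemma~\ref{lem:critical-residual}. For the second term, $|\nabla\eta_\delta|=1/(r\log(1/\delta))$ on the annulus $\delta^2\le r\le\delta$. By the upper bound in \eqref{eq:critical-mass-bounds},
\[
 \int_{\Om}\Phi_N^2|\nabla\eta_\delta|^2\dd\theta\le\frac{C}{\log^2(1/\delta)}\int_{\delta^2}^{\delta}\frac{r}{r^2}\dd r=\frac{C}{\log(1/\delta)}\to0 .
\]
This proves \eqref{eq:cutoff-defect}. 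This cutoff-cost step is the crucial one: it works precisely because the radial mass density of $\Phi_N^2$ is of order $r$, so the logarithmic cutoff costs $O(1/\log(1/\delta))$. For $N=1$ the angular measure has a single point and the same computation applies.

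\emph{Consequence.} By Proposition~\ref{prop:duality},
\[
 \Cp\le\frac{\qform[v_\delta]}{\wform[v_\delta]}=A_N+\frac{\qform[v_\delta]-A_N\wform[v_\delta]}{\wform[v_\delta]} .
\]
The numerator of the last fraction stays bounded while the denominator tends to infinity, so letting $\delta\to0$ gives $\Cp\le A_N$.
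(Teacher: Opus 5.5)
Your proof is correct and follows the paper's argument: a logarithmic cutoff of $\Phi_N$ at the corner, divergence of $\wform[v_\delta]$ from the lower mass bound, the ground-state integration by parts giving the cutoff identity, an $O(1/\log(1/\delta))$ cutoff cost from the upper mass bound, and dominated convergence for the residual term. The only difference is that you cut off in $r=|\theta|$, whereas the paper cuts off in $S=a/4$. Your choice spares the gradient bound $|\nabla S|^2=S-Q_2\le S$ and the step locating $\{\delta^2<S<\delta\}$ inside an annulus in $r$, and is otherwise immaterial.
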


\begin{proof}
Put $\ell_\delta=\log(1/\delta)$, and define
\begin{equation}\label{eq:log-cutoff}
 \chi_\delta(S)=
 \begin{cases}
 0,&S\leq\delta^2,\\
 \displaystyle\frac{\log(S/\delta^2)}{\ell_\delta},&\delta^2<S<\delta,\\
 1,&S\geq\delta,
 \end{cases}
 \qquad v_\delta=\chi_\delta\Phi_N.
\end{equation}
For each fixed $\delta$, this is an $H^1$ function which vanishes near the only singular corner. Elsewhere $\Phi_N$ is smooth up to the faces and has zero trace because of the factor $h$. The trace characterization on the cube gives $v_\delta\in H_0^1(\Om)$.

We first prove divergence of its weighted norm. By \eqref{eq:a-comparison},
$S\geq r^2/\pi^2$. Hence $\chi_\delta=1$ when $r\geq\pi\sqrt\delta$. On the fixed small ball $r<r_0$, the estimates \eqref{eq:a-comparison} and \eqref{eq:critical-mass-bounds} yield
\[
 \wform[v_\delta]\geq c\int_{\pi\sqrt\delta}^{r_0}\frac{\dd r}{r}
 \longrightarrow\infty.
\]
We take $\delta$ small enough that the lower endpoint is less than $r_0$.

Next test the equation
$-\Delta\Phi_N=(A_N/a+R_N)\Phi_N$ against $\chi_\delta^2\Phi_N$. This test belongs to $H_0^1(\Om)$ and vanishes near the corner. To justify it, multiply $\Phi_N$ by a smooth cutoff equal to one on $S\geq\delta^2$ and zero on $S\leq\delta^2/2$. The resulting function is smooth on the closed cube and belongs to $H_0^1$; its weak equation agrees with that of $\Phi_N$ on the support of the test. Density of compactly supported smooth tests therefore justifies the integration by parts. Expanding the two gradients gives
\begin{equation}\label{eq:cutoff-identity}
 \qform[v_\delta]-A_N\wform[v_\delta]
 =\int_{\Om}\Phi_N^2|\nabla\chi_\delta|^2\dd\theta
  +\int_{\Om}R_N\Phi_N^2\chi_\delta^2\dd\theta.
\end{equation}
The Lipschitz junctions in $\chi_\delta$ are harmless for this weak identity; alternatively they can be smoothed in $H^1$ with $\delta$ fixed.

On the transition region,
\[
 |\nabla\chi_\delta|^2
 =\frac{|\nabla S|^2}{S^2\ell_\delta^2}
 =\frac{S-Q_2}{S^2\ell_\delta^2}
 \leq\frac1{S\ell_\delta^2}.
\]
The comparison $r^2/\pi^2\leq S\leq r^2/4$ shows that
$\{\delta^2<S<\delta\}\subset\{2\delta<r<\pi\sqrt\delta\}$.
For small $\delta$, this annulus lies in $r<r_0$, and consequently
\begin{align*}
 \int_{\Om}\Phi_N^2|\nabla\chi_\delta|^2\dd\theta
 &\leq\frac{C}{\ell_\delta^2}
          \int_{2\delta}^{\pi\sqrt\delta}\frac{\dd r}{r}\\
 &=\frac{C}{\ell_\delta^2}
       \left(\frac12\ell_\delta+\log\frac\pi2\right)
 \longrightarrow0.
\end{align*}
Finally, $0\leq\chi_\delta\leq1$ and $\chi_\delta\to1$ pointwise in $\Om$. The function $|R_N|\Phi_N^2$ is integrable by Lemma~\ref{lem:critical-residual}, so dominated convergence applies to the second term of \eqref{eq:cutoff-identity}. This proves \eqref{eq:cutoff-defect}. The numerator of
\[
 \frac{\qform[v_\delta]}{\wform[v_\delta]}
 =A_N+\frac{\qform[v_\delta]-A_N\wform[v_\delta]}{\wform[v_\delta]}
\]
in the second fraction remains bounded while its denominator tends to infinity. The quotient tends to $A_N$, and Proposition~\ref{prop:duality} proves $C_+(N)\leq A_N$.
\end{proof}

\subsection{The exact values in dimensions one and two}

In the first two dimensions, the residual has a pointwise sign. It gives both the lower bound and nonattainment of the continuous quotient.

\begin{proposition}\label{prop:low-dimensions}
One has $C_+(1)=1/4$ and $C_+(2)=4$. Neither infimum in \eqref{eq:duality-norm} is attained by a nonzero function.
\end{proposition}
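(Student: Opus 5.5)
The plan is to combine the pointwise sign of the residual $R_N$ from Lemma~\ref{lem:critical-residual} with the ground-state inequality \eqref{eq:ground-state-extension}. That gives the lower bound $\Cp\geq A_N$. The matching upper bound is already supplied by Lemma~\ref{lem:cutoffs}, and Proposition~\ref{prop:duality} transfers everything back to the lattice constant.

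First I evaluate $R_N$ from \eqref{eq:critical-residual} in the two low dimensions.
\begin{itemize}
\item For $N=1$ we have $Q_2=S^2$, so $\tau\equiv1$. This gives
\[
 R_1=\frac14+\frac5{16}=\frac9{16}>0 .
\]
\item For $N=2$ the formula reads $R_2=-1+2\tau$. Since $\tau\geq1/2$, we get $R_2\geq0$. Equality holds only where $Q_2=S^2/2$, that is, where $q_1=q_2$. This is the diagonal $\theta_1=\theta_2$, a set of measure zero in $\Om$.
\end{itemize}
Next I apply Lemma~\ref{lem:ground-state} with $\Phi=\Phi_N$, $c=A_N$ and $R=R_N$. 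Here $\Phi_N$ is $C^2$ and positive in $\Om$, and $R_N$ is bounded by Lemma~\ref{lem:critical-residual}. This yields
\[
 \qform[v]-A_N\wform[v]\geq\int_{\Om}R_N|v|^2\dd\theta\geq0
 \qquad(v\in H_0^1(\Om)).
\]
Hence the infimum in \eqref{eq:duality-norm} is at least $A_N$. Together with $\Cp\leq A_N$ from Lemma~\ref{lem:cutoffs}, this gives $\Cp=A_N$. Substituting into \eqref{eq:A-def} gives $A_1=1/4$ and $A_2=4$.

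For nonattainment, suppose some $0\ne v\in H_0^1(\Om)$ satisfies $\qform[v]=A_N\wform[v]$. The displayed inequality then forces
\[
 \int_{\Om}R_N|v|^2\dd\theta\leq0 .
\]
For $N=1$ this is impossible, because $R_1=9/16$ is a positive constant and $v\ne0$ in $L^2$. For $N=2$, $R_2>0$ almost everywhere, so the integral vanishes only if $v=0$ almost everywhere, which contradicts $v\ne0$.

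I expect no serious obstacle. The only points needing care are two justifications, both already made in the paper:
\begin{itemize}
\item the ground-state inequality holds for all of $H_0^1(\Om)$, not just compactly supported smooth functions (the extension part of Lemma~\ref{lem:ground-state});
\item the zero set of $R_2$ is Lebesgue-null, which is immediate since it is a diagonal hyperplane.
\end{itemize}
The essential content is simply the elementary algebra showing that $R_1$ and $R_2$ have a sign, using $\tau\in[1/N,1]$.
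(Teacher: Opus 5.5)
Your proposal is correct and follows the paper's proof essentially step for step. The paper likewise computes $R_1=9/16$ and writes $R_2=-1+2\tau=(q_1-q_2)^2/S^2$. It then applies \eqref{eq:ground-state-extension} with $\Phi=\Phi_N$ and $c=A_N$ to get both the lower bound and nonattainment, using that $R_2$ vanishes only on the null diagonal $\theta_1=\theta_2$ (by strict monotonicity of $\sin^2(t/2)$ on $(0,\pi)$), and takes the matching upper bound from Lemma~\ref{lem:cutoffs}.
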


\begin{proof}
When $N=1$, the definitions give $S=q_1$, $\tau=1$ and
\[
 R_1=\frac{6-5}{4}+\frac{9-4}{16}=\frac9{16}.
\]
Apply \eqref{eq:ground-state-extension} with $\Phi=\Phi_1$ and $c=A_1=1/4$. It follows that
\[
 \mathfrak q_1[v]-\frac14\mathfrak w_1[v]
 \geq\frac9{16}\|v\|_2^2
 \quad(v\in H_0^1(0,\pi)).
\]
Thus $C_+(1)\geq1/4$, and equality in the quotient is impossible for a nonzero $v$.

When $N=2$, the residual is $R_2=-1+2\tau$. Using $S=q_1+q_2$, we obtain
\begin{equation}\label{eq:two-remainder}
 R_2=\frac{2(q_1^2+q_2^2)-(q_1+q_2)^2}{S^2}
 =\frac{(q_1-q_2)^2}{S^2}
 =\frac{4(\cos\theta_1-\cos\theta_2)^2}{a^2}.
\end{equation}
The same lemma gives
\[
 \mathfrak q_2[v]-4\mathfrak w_2[v]
 \geq\int_{\Omega_2}R_2|v|^2\dd\theta.
\]
The function $t\mapsto\sin^2(t/2)$ is strictly increasing on $(0,\pi)$, so $R_2$ vanishes exactly on $\theta_1=\theta_2$. This diagonal has two-dimensional measure zero. Therefore the last integral is strictly positive whenever $v\ne0$ in $L^2$. We conclude that $C_+(2)\geq4$ and that the infimum is not attained. In both dimensions Lemma~\ref{lem:cutoffs} gives the opposite inequality, proving the stated exact values.
\end{proof}

\subsection{A tangential identity for the integrated residual}

For higher dimensions the pointwise residual can change sign. Its integral is nevertheless determined by derivatives that preserve $S$. Define
\begin{equation}\label{eq:anisotropy}
 \mathcal K(q)=\sum_{1\leq i<j\leq N}
 \frac{q_iq_j(q_i-q_j)^2}{S^2(1-q_i)(1-q_j)}.
\end{equation}
This is nonnegative. Its apparent singularities at $q_i=1$ are integrable against the densities used below.

\begin{lemma}\label{lem:tangential}
Let $\gamma<3N/2$ and
\begin{equation}\label{eq:rho}
 \rho_\gamma(q)=S^{-\gamma}\prod_{j=1}^N\sqrt{q_j(1-q_j)}
 \quad(q\in(0,1)^N).
\end{equation}
Then $\rho_\gamma$, $\tau\rho_\gamma$ and $\mathcal K\rho_\gamma$ are integrable, and
\begin{equation}\label{eq:tangential-identity}
 \int_{(0,1)^N}\bigl[5-(3N+2)\tau-\mathcal K\bigr]
                        \rho_\gamma\dd q=0.
\end{equation}
\end{lemma}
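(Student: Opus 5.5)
The identity \eqref{eq:tangential-identity} should come from integrating exact derivatives along directions tangent to the level sets of $S$. For $i\ne j$ set $D_{ij}=\partial_{q_i}-\partial_{q_j}$. Since $D_{ij}S=0$, we have $D_{ij}S^{-\gamma}=0$. So whenever $G_{ij}$ is a suitable function,
\[
 \int_{(0,1)^N}D_{ij}\bigl(G_{ij}\rho_\gamma\bigr)\dd q=0
\]
reduces to $\int\bigl(D_{ij}G_{ij}+G_{ij}D_{ij}\log w\bigr)\rho_\gamma\dd q=0$, where $w=\prod_k\sqrt{q_k(1-q_k)}$ is the product Jacobi weight. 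This is the same weight that makes $\mathcal J_N$ in \eqref{eq:Jacobi} symmetric: one checks $w^{-1}\partial_j(q_j(1-q_j)w)=\tfrac32-3q_j$. Here
\[
 D_{ij}\log w=\frac{1-2q_i}{2q_i(1-q_i)}-\frac{1-2q_j}{2q_j(1-q_j)}.
\]
The plan is to choose $G_{ij}$ antisymmetric in $(i,j)$, homogeneous of degree zero in the singular variables, and divisible by $q_iq_j$. The natural first choice is
\[
 G_{ij}=\kappa\,\frac{q_iq_j(q_i-q_j)}{S^2},
\]
with a constant $\kappa$ to be fixed.

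\emph{Integrability and vanishing boundary terms.} Near the corner, $\rho_\gamma\sim S^{-\gamma}\prod_k\sqrt{q_k}$. Polar-type coordinates in $q$ give total degree $-\gamma+N/2+(N-1)$, so $\rho_\gamma$ is integrable exactly when $\gamma<3N/2$. Since $\tau\le1$ is bounded, $\tau\rho_\gamma$ is integrable as well. For $\mathcal K\rho_\gamma$, the factor $(1-q_i)^{-1}$ is offset by $\sqrt{1-q_i}$ in $w$, leaving $(1-q_i)^{-1/2}$, which is integrable. For the integration by parts I will work on $[\epsilon,1-\epsilon]^N$ minus a small corner neighbourhood $\{S<\eta\}$. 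The face terms vanish as $\epsilon\to0$ because $G_{ij}\rho_\gamma$ carries $\sqrt{q_k}$ or $\sqrt{1-q_k}$ factors. The corner term vanishes because $G_{ij}$ is bounded and $\int_{S=\eta}\rho_\gamma\to0$ when $\gamma<3N/2$.

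\emph{Algebra.} The main step is to compute $\sum_{i<j}\bigl(D_{ij}G_{ij}+G_{ij}D_{ij}\log w\bigr)$ and recognise it as a multiple of $5-(3N+2)\tau-\mathcal K$. In $D_{ij}\log w$, write $\frac{1-2q}{2q(1-q)}=\frac1{2q}-\frac1{2(1-q)}$. Against $q_iq_j(q_i-q_j)$, the $\frac1{2q}$ parts produce $-\tfrac12(q_i-q_j)^2$. Summed over pairs, this gives $-\tfrac12(NQ_2-S^2)/S^2$, a polynomial in $\tau$.

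The $\frac1{1-q}$ parts produce
\[
 -\frac{q_iq_j(q_i-q_j)^2}{2(1-q_i)(1-q_j)},
\]
which is precisely (half of) the summand of $\mathcal K$. This is why $\kappa$ will be normalised so that $\mathcal K$ appears with coefficient $-1$, presumably $\kappa=2$.

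The term $D_{ij}G_{ij}$ only differentiates the numerator, since $D_{ij}S=0$. It gives
\[
 \kappa\bigl(q_j(q_i-q_j)-q_i(q_i-q_j)+2q_iq_j\bigr)/S^2=\kappa\bigl(4q_iq_j-q_i^2-q_j^2\bigr)/S^2 .
\]
Summing over pairs uses $\sum_{i<j}q_iq_j=(S^2-Q_2)/2$ and $\sum_{i<j}(q_i^2+q_j^2)=(N-1)Q_2$, so this part is again affine in $\tau$.

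Collecting terms should give $\kappa\bigl[c_1-c_2\tau-\tfrac12\mathcal K\bigr]$. I expect the main obstacle to be exactly this bookkeeping: checking that the constants come out as $5$ and $3N+2$. If the simple $G_{ij}$ gives the wrong ratio, I would first add to it a multiple of the $S$-preserving field $\sum_j(q_j-S/N)\partial_j$, whose divergence against $\rho_\gamma$ also involves $\gamma$. In that case I would have to check that the $\gamma$-dependence cancels, as the statement asserts independence of $\gamma$.
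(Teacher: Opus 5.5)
Your plan is the paper's proof. The paper integrates $D_{ij}(V_{ij}\rho_\gamma)$ with exactly your field $V_{ij}=q_iq_j(q_i-q_j)/S^2$, i.e.\ $\kappa=1$. The bookkeeping you left open closes without any correction field, since
\[
 \sum_{i<j}D_{ij}V_{ij}=2-(N+1)\tau,
 \qquad
 \sum_{i<j}V_{ij}\Bigl(\tfrac1{2q_i}-\tfrac1{2q_j}\Bigr)=\tfrac12-\tfrac N2\tau .
\]
Together with the $-\tfrac12\mathcal K$ from the $(1-q)^{-1}$ parts, this gives $\sum_{i<j}\rho_\gamma^{-1}D_{ij}(V_{ij}\rho_\gamma)=\tfrac12\bigl[5-(3N+2)\tau-\mathcal K\bigr]$, confirming your guess $\kappa=2$.

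One justification needs correcting: the corner term. You argue it vanishes because $G_{ij}$ is bounded and $\int_{\{S=\eta\}}\rho_\gamma\,d\sigma\to0$. That second claim fails, because $\int_{\{S=\eta\}}\rho_\gamma\,d\sigma\asymp\eta^{3N/2-1-\gamma}$. This does not tend to zero when $\gamma\geq 3N/2-1$, and that range includes the value $\gamma=2\beta_N=3N/2-1$ actually needed in Proposition~\ref{prop:total-defect}.

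The fix is immediate. The direction $e_i-e_j$ is tangent to the hyperplane $\{S=\eta\}$, so the flux of $G_{ij}\rho_\gamma(e_i-e_j)$ through it is identically zero. Alternatively, use $|V_{ij}|\leq S$, which makes the term $O(\eta^{3N/2-\gamma})$. The paper uses the same tangency mechanism in a different form: it multiplies by a smooth cutoff depending only on $S$, which $D_{ij}$ annihilates, and then removes the cutoff by dominated convergence.
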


\begin{proof}
If $N=1$, then $\rho_\gamma=q_1^{1/2-\gamma}(1-q_1)^{1/2}$ is integrable because $\gamma<3/2$. Also $\tau=1$ and $\mathcal K=0$, so the integrand in \eqref{eq:tangential-identity} vanishes. Assume henceforth that $N\geq2$.

We first verify the integrability needed for integration by parts. When $S<1/2$, write $q_j=Sz_j$, where $z_j>0$ and $\sum_jz_j=1$. With $z_1,\ldots,z_{N-1}$ as independent coordinates, the Jacobian is $S^{N-1}$. Thus
\[
 \rho_\gamma\dd q
 =S^{3N/2-\gamma-1}
   \prod_j z_j^{1/2}(1-Sz_j)^{1/2}\dd S\dd z.
\]
The integral over the simplex is finite, and $3N/2-\gamma-1>-1$. Moreover $\tau\leq1$, and each summand of $\mathcal K$ is bounded by $CS^2$ in this region. These facts prove integrability near $S=0$. Where $S\geq1/2$, all factors involving $S$ are bounded. In a term $\mathcal K_{ij}\rho_\gamma$, the possible singular factors at the upper faces are
$(1-q_i)^{-1/2}(1-q_j)^{-1/2}$; all remaining endpoint powers are nonnegative. These factors are integrable, including at intersections of faces. This proves all three integrability assertions.

Fix $i<j$, and define
\[
 D_{ij}=\partial_i-\partial_j,\qquad
 V_{ij}=\frac{q_iq_j(q_i-q_j)}{S^2},\qquad
 \mathcal K_{ij}=\frac{q_iq_j(q_i-q_j)^2}{S^2(1-q_i)(1-q_j)}.
\]
Since $D_{ij}S=0$, differentiating the radial factor produces no term. The remaining derivatives are
\begin{align*}
 D_{ij}V_{ij}&=\frac{4q_iq_j-q_i^2-q_j^2}{S^2},\\
 D_{ij}\log\rho_\gamma
 &=\frac1{2q_i}-\frac1{2q_j}
   -\frac1{2(1-q_i)}+\frac1{2(1-q_j)}.
\end{align*}
Multiplication by $V_{ij}$ gives, respectively, the two additional contributions
\[
 -\frac{(q_i-q_j)^2}{2S^2},\qquad
 -\frac{q_iq_j(q_i-q_j)^2}{2S^2(1-q_i)(1-q_j)}.
\]
Adding all terms proves the pointwise identity
\begin{equation}\label{eq:pair-divergence}
 \frac{D_{ij}(V_{ij}\rho_\gamma)}{\rho_\gamma}
 =\frac{5q_iq_j-\frac32(q_i^2+q_j^2)}{S^2}
       -\frac12\mathcal K_{ij}.
\end{equation}

We now justify that its integral is zero. Choose $\eta_\varepsilon\in C^\infty([0,\infty))$ with values in $[0,1]$, zero on $S\leq\varepsilon$ and one on $S\geq2\varepsilon$. Then $D_{ij}\eta_\varepsilon(S)=0$. For fixed $\varepsilon$, the function
$\eta_\varepsilon V_{ij}\rho_\gamma$ and its coordinate derivatives in $q_i,q_j$ are integrable. Its endpoint traces in these coordinates are zero: the lower endpoint has a factor $q_i^{3/2}$ or $q_j^{3/2}$, and the upper endpoint has a factor $(1-q_i)^{1/2}$ or $(1-q_j)^{1/2}$. Fubini's theorem and the one-dimensional fundamental theorem of calculus therefore give
\[
 \int_{(0,1)^N}D_{ij}(\eta_\varepsilon V_{ij}\rho_\gamma)\dd q=0.
\]
Equivalently, the integral of the right-hand side of \eqref{eq:pair-divergence}, multiplied by $\eta_\varepsilon\rho_\gamma$, is zero. The absolute value is bounded by $C(1+\mathcal K_{ij})\rho_\gamma$, which is integrable by the first part of the proof. Dominated convergence removes $\eta_\varepsilon$.

Finally, summing over $i<j$ and using
\[
 \sum_{i<j}q_iq_j=\frac{S^2-Q_2}{2},\qquad
 \sum_{i<j}(q_i^2+q_j^2)=(N-1)Q_2,
\]
we find that the sum of the right-hand sides of \eqref{eq:pair-divergence} is
\[
 \frac52(1-\tau)-\frac32(N-1)\tau-\frac12\mathcal K
 =\frac12\bigl[5-(3N+2)\tau-\mathcal K\bigr].
\]
Its integral against $\rho_\gamma$ is zero, proving \eqref{eq:tangential-identity}.
\end{proof}

We now apply the identity at the critical exponent. The outcome isolates a dimension-dependent scalar and a nonnegative correction.

\begin{proposition}\label{prop:total-defect}
For every $N\geq1$,
\begin{equation}\label{eq:total-defect}
 \int_{\Om}R_N\Phi_N^2\dd\theta
 =\frac{14-5N}{16}\int_{\Om}\Phi_N^2\dd\theta
  -\frac{3N-2}{16}\int_{\Om}\mathcal K\Phi_N^2\dd\theta.
\end{equation}
In particular, the left-hand side is strictly negative for $N\geq3$.
\end{proposition}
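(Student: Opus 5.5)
The plan is to move the integrals to the $q$ coordinates, where $\Phi_N^2\dd\theta$ becomes a constant multiple of the density $\rho_\gamma$ of Lemma~\ref{lem:tangential}. Then the tangential identity \eqref{eq:tangential-identity} eliminates the $\tau$-term of $R_N$ in favour of the total mass and the $\mathcal K$-term.

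\emph{Step 1: change of variables.} On each factor, $\theta_j\mapsto q_j=\sin^2(\theta_j/2)$ is a diffeomorphism $(0,\pi)\to(0,1)$ with $\dd q_j=\sqrt{q_j(1-q_j)}\dd\theta_j$. Moreover $\sin^2\theta_j=4q_j(1-q_j)$ and $a=4S$. Hence
\[
 \Phi_N^2\dd\theta=h^2a^{-2\beta_N}\dd\theta
 =4^{N}(4S)^{-2\beta_N}\prod_j q_j(1-q_j)\,\prod_j\frac{\dd q_j}{\sqrt{q_j(1-q_j)}}
 =4^{N-2\beta_N}\rho_\gamma\dd q
\]
with $\gamma=2\beta_N=(3N-2)/2$. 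Since $\gamma<3N/2$, Lemma~\ref{lem:tangential} applies. It shows that $\rho_\gamma$, $\tau\rho_\gamma$ and $\mathcal K\rho_\gamma$ are integrable, so every integral in \eqref{eq:total-defect} is finite. This is consistent with $\Phi_N\in L^2$ from Lemma~\ref{lem:critical-residual}. The functions $\tau$ and $\mathcal K$ depend only on $q$, so all three integrals transform with the same constant factor $4^{N-2\beta_N}$. That factor cancels from the identity, and it suffices to prove \eqref{eq:total-defect} with $\Phi_N^2\dd\theta$ replaced by $\rho_\gamma\dd q$.

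\emph{Step 2: eliminate $\tau$.} Write $I_0=\int\rho_\gamma$, $I_\tau=\int\tau\rho_\gamma$ and $I_K=\int\mathcal K\rho_\gamma$. Identity \eqref{eq:tangential-identity} gives $(3N+2)I_\tau=5I_0-I_K$. Substituting into $\int R_N\rho_\gamma=\frac{6-5N}{4}I_0+\frac{9N^2-4}{16}I_\tau$ and using $9N^2-4=(3N-2)(3N+2)$, we get
\[
 \int R_N\rho_\gamma=\frac{6-5N}{4}I_0+\frac{3N-2}{16}(5I_0-I_K)
 =\frac{24-20N+15N-10}{16}I_0-\frac{3N-2}{16}I_K.
\]
The coefficient of $I_0$ is $(14-5N)/16$, which is \eqref{eq:total-defect}. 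For $N=1$ the identity is trivial but consistent: $\tau=1$, $\mathcal K=0$, and $R_1=9/16=(14-5)/16$.

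\emph{Step 3: sign.} For $N\geq3$ we have $14-5N<0$ and $3N-2>0$. Also $\int\Phi_N^2>0$ because $\Phi_N>0$, and $\mathcal K\geq0$. Hence the right-hand side of \eqref{eq:total-defect} is strictly negative.

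There is no real obstacle once Lemma~\ref{lem:tangential} is available. The only points needing care are the Jacobian bookkeeping in Step 1, in particular matching the exponent $\gamma=2\beta_N$, and checking the admissibility condition $\gamma<3N/2$.
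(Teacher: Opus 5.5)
Your proposal is correct and follows essentially the same route as the paper. You rewrite $\Phi_N^2\,\mathrm d\theta$ as $4^{N-2\beta_N}\rho_{2\beta_N}\,\mathrm dq$ with $2\beta_N<3N/2$, use the tangential identity to get $(3N+2)I_\tau=5I_0-I_K$, and substitute into $R_N$; your Jacobian bookkeeping, the simplification via $9N^2-4=(3N-2)(3N+2)$, and the sign argument all check out.
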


\begin{proof}
The coordinate change $q_j=\sin^2(\theta_j/2)$ gives
\[
 \dd\theta_j=\frac{\dd q_j}{\sqrt{q_j(1-q_j)}},\qquad
 h^2=4^N\prod_j q_j(1-q_j).
\]
Since $a=4S$, we obtain the exact measure identity
\begin{equation}\label{eq:critical-density}
 \Phi_N^2\dd\theta
 =4^{N-2\beta_N}S^{-2\beta_N}
       \prod_j\sqrt{q_j(1-q_j)}\dd q.
\end{equation}
Here $2\beta_N=3N/2-1<3N/2$, so Lemma~\ref{lem:tangential} applies and all resulting integrals are finite. Introduce the abbreviations
\[
 I_0=\int_{\Om}\Phi_N^2\dd\theta,\quad
 I_\tau=\int_{\Om}\tau\Phi_N^2\dd\theta,\quad
 I_K=\int_{\Om}\mathcal K\Phi_N^2\dd\theta.
\]
After multiplying \eqref{eq:tangential-identity} by the constant in \eqref{eq:critical-density}, it reads
$(3N+2)I_\tau=5I_0-I_K$. Substitution into \eqref{eq:critical-residual} yields
\begin{align*}
 \int_{\Om}R_N\Phi_N^2\dd\theta
 &=\frac{6-5N}{4}I_0
    +\frac{9N^2-4}{16(3N+2)}(5I_0-I_K)\\
 &=\frac{4(6-5N)+5(3N-2)}{16}I_0
    -\frac{3N-2}{16}I_K\\
 &=\frac{14-5N}{16}I_0-\frac{3N-2}{16}I_K.
\end{align*}
This is \eqref{eq:total-defect}. We have $I_0>0$ because $\Phi_N>0$, and $I_K\geq0$ because $\mathcal K\geq0$. For $N\geq3$, $14-5N\leq-1$, so the first term is strictly negative and the second is nonpositive.
\end{proof}

\begin{proof}[Proof of \eqref{eq:classification} in Theorem~\ref{thm:main}]
Proposition~\ref{prop:low-dimensions} gives $C_+(1)=A_1=1/4$ and $C_+(2)=A_2=4$. Fix $N\geq3$ and write
\[
 d_N=\int_{\Om}R_N\Phi_N^2\dd\theta<0.
\]
By Lemma~\ref{lem:cutoffs}, the quantity $\qform[v_\delta]-A_N\wform[v_\delta]$ tends to $d_N$. Choose $\delta$ so small that its difference from $d_N$ has absolute value less than $-d_N/2$. Then
\[
 \qform[v_\delta]-A_N\wform[v_\delta]<d_N/2<0.
\]
The test function is nonzero and its weighted norm is positive and finite. Proposition~\ref{prop:duality} gives
\[
 C_+(N)\leq\frac{\qform[v_\delta]}{\wform[v_\delta]}<A_N.
\]
This proves the entire dimensional classification and, in particular, the upper bound $C_+(3)<49/4$. The lower bound in \eqref{eq:three-bounds} will be proved in Section~\ref{sec:three} after attainment has been established.
\end{proof}

In dimension three the identity has the particularly simple form
\begin{equation}\label{eq:three-total-defect}
 \int_{\Omega_3}R_3\Phi_3^2\dd\theta
 =-\frac1{16}\int_{\Omega_3}\Phi_3^2\dd\theta
  -\frac7{16}\int_{\Omega_3}\mathcal K\Phi_3^2\dd\theta<0.
\end{equation}
The local singularity still determines the coefficient $49/4$. The strict decrease comes from the integrated residual over the cube, after the singular cutoff cost has vanished.

\section{The spectral threshold and attainment}\label{sec:spectrum}

The strict inequality proved above creates an eigenvalue below a fixed essential threshold. To identify that threshold, we localize near the singular corner and transform its radial variable logarithmically. The remainder of the cube contributes only compactness. This also clarifies why the positive remainders in dimensions one and two do not raise the best coefficient.

\subsection{The essential spectrum of the weighted operator}

The following result identifies the essential spectrum through localization and explicit Weyl sequences.

\begin{proposition}\label{prop:essential}
For the form realization defined in \eqref{eq:K-domain},
\begin{equation}\label{eq:essential-spectrum}
                       \sigma_{\ess}(K_N)=[A_N,\infty).
\end{equation}
\end{proposition}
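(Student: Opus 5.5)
We prove the two inclusions separately. Throughout, $K_N$ is compared with the continuous corner problem through localization at $\theta=0$; no other point of $\overline{\Om}$ carries a singularity of $a^{-1}$.

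\emph{Lower bound $\inf\sigma_{\ess}(K_N)\geq A_N$.} We use a quadratic-form IMS localization. Fix $\varepsilon>0$ and a partition $\chi_0^2+\chi_1^2=1$, with $\chi_0$ supported in $\{|\theta|<r\}$ and equal to one near the corner. For $v\in H_0^1(\Om)$,
\[
 \qform[v]=\qform[\chi_0v]+\qform[\chi_1v]-\int_{\Om}\bigl(|\nabla\chi_0|^2+|\nabla\chi_1|^2\bigr)|v|^2\dd\theta .
\]
For the corner piece, Lemma~\ref{lem:local-hardy} gives $\qform[\chi_0v]\geq A_N\int|\chi_0v|^2/|\theta|^2$. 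By \eqref{eq:a-comparison}, $a\leq|\theta|^2$, and $a=|\theta|^2(1+O(|\theta|^2))$ on the support of $\chi_0$, so this is at least $A_N(1-Cr^2)\wform[\chi_0v]$. The remaining terms are compact relative to the form. Indeed, $\chi_1v\mapsto \chi_1 v$ from $H_0^1$ into $\mathcal H_N$ is compact, because the weight is bounded on $\supp\chi_1$ and Rellich applies. The gradient-error term is an $L^2$ term, hence also compact. It follows that the form $\qform-(A_N(1-Cr^2)-\varepsilon)\wform$ is bounded below by a compact perturbation. Thus $\inf\sigma_{\ess}\geq A_N(1-Cr^2)$, and letting $r\to0$ gives $\inf\sigma_{\ess}(K_N)\geq A_N$. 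We use the min-max or Persson characterization in the form framework \cite{ReedSimon}.

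\emph{Upper inclusion $[A_N,\infty)\subset\sigma_{\ess}$.} For each $\lambda=A_N+k^2$ with $k\geq0$, we build a singular Weyl sequence supported in shrinking annuli near the corner. We pass to $t=-\log r$ and write
\[
 v=r^{-(N-2)/2}Y_N(\omega)\,g(t)\,h\text{-corrected},
\]
concretely $v=\Phi_N\cdot g(-\log r)$ with the critical profile of Lemma~\ref{lem:critical-residual}. Take $g_m(t)=e^{ikt}\phi((t-T_m)/L_m)$, where $\phi$ is a fixed bump, $L_m\to\infty$, and $T_m-L_m\to\infty$ fast, so that the supports are disjoint and move into the corner. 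By \eqref{eq:critical-residual} and the ground-state computation of Lemma~\ref{lem:ground-state}, we have $-\Delta(\Phi_Ng)=\Phi_N\bigl[(A_N/a+R_N)g-\Phi_N^{-2}\nabla\cdot(\Phi_N^2\nabla g)\bigr]$. By \eqref{eq:critical-asymptotic}, $\Phi_N^2 r^{N-1}\approx rY_N^2$, so in the $t$ variable the last term becomes $r^{-2}(-g''+O(g'))$, up to $O(r^2)$ errors. Writing everything relative to the weighted norm $\|v\|_{\mathcal H_N}^2\approx\int|g|^2\dd t$, we estimate
\[
 \|(K_N-\lambda)v_m\|_{\mathcal H_N}\leq\bigl(C/L_m+Ce^{-T_m}\bigr)\|v_m\|_{\mathcal H_N}.
\]
Here the first error comes from derivatives of the envelope, and the second from $R_N$ and the Taylor errors $a=r^2(1+O(r^2))$, both multiplied by $a\leq r^2$. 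We also check $v_m\in\Dom(K_N)$, which holds since $v_m$ is smooth, vanishes on the faces through the factor $h$, and vanishes near the corner. Normalized and weakly null by disjoint supports, these form Weyl sequences, so $\lambda\in\sigma_{\ess}$.

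The main obstacle is the bookkeeping in the Weyl-sequence step. The first-order term $\Phi_N^{-2}\nabla\Phi_N^2\cdot\nabla g$ must produce exactly the cancellation of the drift. This holds because $\Phi_N\approx r^{-(N-2)/2}$ makes $\Phi_N^2r^{N-1}\approx r$, and $r\,\partial_r=-\partial_t$ then gives a pure $-\partial_t^2$. We must also verify that the angular contribution is already absorbed in $A_N$. I would first do the computation for the exact model $r^{-(N-2)/2}Y_N(\omega)$ on the cone with weight $r^{-2}$, where it is explicit, and then treat $\Phi_N$ versus the model and $a$ versus $r^2$ as $O(r^2)$ relative perturbations on the supports.
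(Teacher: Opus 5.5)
Your proposal is correct and follows essentially the same route as the paper. Below $A_N$, both arguments use the IMS localization, the local Hardy inequality \eqref{eq:local-hardy} with $a=|\theta|^2(1+O(|\theta|^2))$, and Rellich compactness; for $[A_N,\infty)$, both use singular Weyl sequences of the form $e^{ikt}\times(\text{slowly varying envelope})\times Y_N(\omega)$ in $t=-\log r$, concentrating at the corner. The only difference is cosmetic: you place the envelope on $\Phi_N$, whereas the paper places it on the exact cone model $r^{-(N-2)/2}Y_N$, where the conjugated operator is exactly $m^{-1}A_0$ with $m=r^2/a$; this spares the extra $O(r^2)$ drift and residual bookkeeping that your version requires.
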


\begin{proof}
We prove the two spectral inclusions separately. The variational principle, spectral projections and singular Weyl criterion used below are standard; see \cite{ReedSimon}. We supply the estimates specific to the singular weight.

\emph{Exclusion of essential spectrum below $A_N$.}
Taylor's formula for $4\sin^2(\theta_j/2)$, summed over the coordinates, gives
\begin{equation}\label{eq:a-taylor}
 a(r\omega)=r^2+O(r^4),\qquad \frac{r^2}{a(r\omega)}=1+O(r^2),
\end{equation}
uniformly in $\omega\in\Sigma_N$. Fix $0<\varepsilon<A_N$. Choose a small $r_0>0$ such that
$a^{-1}\leq A_N(A_N-\varepsilon)^{-1}r^{-2}$ on $r<2r_0$. Lemma~\ref{lem:local-hardy} implies
\begin{equation}\label{eq:local-threshold}
 \qform[v]\geq(A_N-\varepsilon)\wform[v]
 \quad\text{if }\supp v\subset\{r<2r_0\}.
\end{equation}

Choose real smooth radial cutoffs $\chi,\eta$ with $\chi^2+\eta^2=1$, $\chi=1$ on $r\leq r_0$, and $\chi=0$ on $r\geq2r_0$. Such a pair is obtained by taking the cosine and sine of a smooth transition from $0$ to $\pi/2$. The product rule gives the localization identity
\[
 \qform[v]=\qform[\chi v]+\qform[\eta v]
 -\int_{\Om}(|\nabla\chi|^2+|\nabla\eta|^2)|v|^2\dd\theta.
\]
The mixed terms cancel because $\chi\nabla\chi+\eta\nabla\eta=0$. On the support of $\eta$, the weight $a^{-1}$ is bounded. Thus \eqref{eq:local-threshold}, $\qform[\eta v]\geq0$, and
$\wform[v]=\wform[\chi v]+\wform[\eta v]$ imply
\begin{equation}\label{eq:compact-error}
 \qform[v]\geq(A_N-\varepsilon)\|v\|_{\mathcal H_N}^2
                        -C_\varepsilon\|v\|_2^2.
\end{equation}

Let $0\leq\lambda<A_N-\varepsilon$, and suppose that the range of the spectral projection $\mathbf1_{[0,\lambda]}(K_N)$ is infinite dimensional. It contains an orthonormal sequence $(u_j)$ in $\mathcal H_N$. The spectral theorem gives
$\qform[u_j]\leq\lambda$, so this sequence is bounded in $H_0^1(\Om)$. Rellich compactness supplies a subsequence converging strongly in unweighted $L^2$. The limit is zero. Indeed $u_j\rightharpoonup0$ in $\mathcal H_N$, and, for each $g\in L^2(\Om)$,
\[
 ag\in\mathcal H_N,\qquad
 \langle u_j,g\rangle_{L^2}=\langle u_j,ag\rangle_{\mathcal H_N}\longrightarrow0.
\]
Applying \eqref{eq:compact-error} to that subsequence gives
$\lambda\geq A_N-\varepsilon-o(1)$, a contradiction. Hence every spectral projection onto an interval strictly below $A_N$ has finite rank, after taking $\varepsilon$ sufficiently small. Since $K_N\geq0$, this proves
$\sigma_{\ess}(K_N)\subset[A_N,\infty)$.

\emph{Construction of singular Weyl sequences.}
Fix $r_0<\pi$ and restrict attention to $0<r<r_0$, where the cube agrees exactly with the cone of angular section $\Sigma_N$. Set
\[
 b_N=\frac{N-2}{2},\qquad t=-\log r,\qquad
 v(r\omega)=r^{-b_N}w(t,\omega),\qquad
 m(t,\omega)=\frac{r^2}{a(r\omega)}.
\]
For a function supported in a compact subinterval of $t> -\log r_0$, the change of variables gives
\begin{align}
 \|v\|_{\mathcal H_N}^2
 &=\int m|w|^2\dd t\dd\omega,
 \label{eq:cylinder-norm}\\
 \qform[v]
 &=\int\bigl(|\partial_tw|^2+|\nabla_\omega w|^2+b_N^2|w|^2\bigr)
                  \dd t\dd\omega.
 \label{eq:cylinder-form}
\end{align}
For the norm, the radial factor is
$r^{-2b_N}r^{N-1}\dd r/a=(r^2/a)\dd t$ after reversing the integration limits. For the energy,
$\partial_rv=-r^{-b_N-1}(b_Nw+\partial_tw)$; the cross term is
$b_N\int\partial_t|w|^2\dd t=0$. This proves both identities.

Write $A_0=-\partial_t^2-\Delta_{\Sigma_N}+b_N^2$. Comparing the form in \eqref{eq:cylinder-form} with the weighted inner product in \eqref{eq:cylinder-norm} shows that the transformed operator is
\begin{equation}\label{eq:cylinder-operator}
 m^{-1}A_0,
 \qquad m=1+O(e^{-2t}),
\end{equation}
with Dirichlet angular conditions. There are no angular terms for $N=1$.

Normalize $Y_N$ to have angular $L^2$ norm one; in dimension one use the one-point measure. Given $\lambda\geq A_N$, put $k=\sqrt{\lambda-A_N}$. Choose $\zeta\in C_c^\infty(0,1)$ with $\|\zeta\|_2=1$, and define
\[
 w_j(t,\omega)=j^{-1/2}\zeta\!\left(\frac{t-j^2}{j}\right)e^{ikt}Y_N(\omega).
\]
For large $j$ its radial support lies in the chosen cone. Since $\mu_N+b_N^2=A_N$, direct differentiation yields
\[
 (A_0-\lambda)w_j
 =e^{ikt}Y_N\left[-j^{-5/2}\zeta''\!\left(\frac{t-j^2}{j}\right)
                   -2ikj^{-3/2}\zeta'\!\left(\frac{t-j^2}{j}\right)\right].
\]
Consequently
\begin{equation}\label{eq:weyl-error}
 \|(A_0-\lambda)w_j\|_2
 \leq j^{-2}\|\zeta''\|_2+2|k|j^{-1}\|\zeta'\|_2\longrightarrow0.
\end{equation}
This also covers $\lambda=A_N$, when $k=0$.

Let $v_j$ denote the corresponding function on the cube, extended by zero outside the small cone. It belongs to $\Dom(K_N)$. To check this, note that its radial cutoff vanishes near both endpoints of its annular support. Furthermore $r^{-b_N}Y_N(\omega)$ is a constant multiple of $r^{-b_N-N}\prod_i\theta_i$, which is smooth away from the origin and vanishes on each coordinate face. Thus $v_j$ is smooth on its closed annulus, has zero trace, and $-a\Delta v_j\in\mathcal H_N$. Integration by parts verifies \eqref{eq:K-domain}.

The unweighted cylinder norm of $w_j$ is one, and
$\sup_{\supp w_j}|m-1|\leq Ce^{-2j^2}$, so $\|v_j\|_{\mathcal H_N}\to1$. Also
\[
 (m^{-1}A_0-\lambda)w_j
 =m^{-1}(A_0-\lambda)w_j+\lambda(m^{-1}-1)w_j.
\]
The weighted and unweighted cylinder norms are uniformly equivalent on these supports. Combining this fact with \eqref{eq:weyl-error} gives
$\|(K_N-\lambda)v_j\|_{\mathcal H_N}\to0$.
Finally the supports lie in $r\leq e^{-j^2}$, and for every $g\in\mathcal H_N$,
\[
 |\langle v_j,g\rangle_{\mathcal H_N}|
 \leq\|v_j\|_{\mathcal H_N}
 \left(\int_{\{r\leq e^{-j^2}\}}\frac{|g|^2}{a}\dd\theta\right)^{1/2}
 \longrightarrow0.
\]
After normalizing $v_j$, the singular Weyl criterion places $\lambda$ in the essential spectrum. Since $\lambda\geq A_N$ was arbitrary, the reverse inclusion follows.
\end{proof}

\subsection{The positive eigenfunction below the threshold}

The spectral gap between $\Cp$ and $A_N$ prevents loss of compactness at the corner. Positivity and connectedness then determine the first eigenspace.

\begin{corollary}\label{cor:attainment}
For every $N\geq3$, the infimum in \eqref{eq:duality-norm} is attained by a function $v_N\in H_0^1(\Om)$ satisfying
\begin{equation}\label{eq:first-eigenfunction}
 -\Delta v_N=\frac{\Cp}{a}v_N,
 \qquad v_N>0\text{ in }\Om,
 \qquad \wform[v_N]=1.
\end{equation}
This normalization determines $v_N$ uniquely among positive functions. It is invariant under permutations of the coordinates.
\end{corollary}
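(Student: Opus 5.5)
Since $N\ge 3$, the classification already proved gives $\Cp<A_N$, and Proposition~\ref{prop:duality} identifies $\Cp$ with $\inf\sigma(K_N)$: the form of $K_N$ is $\qform$ on $H_0^1(\Om)$ with norm $\wform$, so the Rayleigh quotient infimum equals the bottom of the spectrum. By Proposition~\ref{prop:essential}, $\sigma_{\ess}(K_N)=[A_N,\infty)$. The bottom of the spectrum therefore lies strictly below the essential spectrum, so it is an isolated eigenvalue of finite multiplicity. Any eigenfunction $v\in\Dom(K_N)\subset H_0^1(\Om)$ with $\wform[v]=1$ attains the infimum, and \eqref{eq:K-domain} gives the weak equation $-\Delta v=\Cp a^{-1}v$ in $\Om$.

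If one prefers to avoid the spectral theorem at this point, there is a direct variational route. Take a minimizing sequence normalized by $\wform=1$. It is bounded in $H_0^1$, so pass to a weak limit that is strong in $L^2$ by Rellich. Inequality \eqref{eq:compact-error} with $\varepsilon<A_N-\Cp$ then shows that the $L^2$ limit cannot vanish. Standard convexity of $\qform-\Cp\wform$ (the concentration-compactness splitting $\qform[u_j]=\qform[u]+\qform[u_j-u]+o(1)$, with the defect controlled by \eqref{eq:compact-error}) shows that the limit attains the infimum. I would present the spectral argument and mention this only as a check.

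For positivity, the key step is that $\qform[|v|]=\qform[v]$ and $\wform[|v|]=\wform[v]$ for real $v\in H_0^1$. This means $|v|$ is also a minimizer, and hence an eigenfunction. For complex eigenfunctions, the real and imaginary parts are handled separately, since $K_N$ is real. Now $|v|\ge0$ is a weak solution of $-\Delta|v|=\Cp a^{-1}|v|\ge 0$, and $a^{-1}$ is locally bounded in the interior. The strong maximum principle, or the weak Harnack inequality for superharmonic functions on the connected open set $\Om$, then gives $|v|>0$ everywhere in $\Om$. So $v$ has no zeros, and by continuity (local elliptic regularity) it has constant sign.

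Simplicity follows. Two orthogonal eigenfunctions in $\mathcal H_N$ cannot both be of strict constant sign, since $\int v_1v_2a^{-1}\,\mathrm d\theta\neq0$ for positive functions. The eigenspace is therefore one-dimensional, and $\wform[v_N]=1$ with $v_N>0$ fixes $v_N$ uniquely. Coordinate permutations $P$ preserve $\Om$, $a$, $\qform$ and $\wform$, so $v_N\circ P$ is again a positive normalized minimizer, and uniqueness gives $v_N\circ P=v_N$. I expect the main obstacle to be justifying the maximum principle step with the singular weight. The fix is to apply it only on compact subsets of $\Om$, where $a^{-1}$ is bounded, which suffices because positivity is an interior statement.
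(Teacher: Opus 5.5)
Your proposal is correct and follows essentially the same route as the paper: the gap $\Cp<A_N=\inf\sigma_{\ess}(K_N)$ makes the spectral bottom an isolated eigenvalue, the modulus trick and the strong maximum principle on compact subsets give positivity, every real eigenfunction then has a fixed sign, which gives simplicity, and uniqueness yields permutation invariance. Your supplementary compactness check via \eqref{eq:compact-error} is not in the paper, but the main argument coincides with it.
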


\begin{proof}
Proposition~\ref{prop:duality} identifies $C_+(N)$ with the spectral bottom of $K_N$. For $N\geq3$, the classification gives $C_+(N)<A_N$, and Proposition~\ref{prop:essential} places the essential spectrum at and above $A_N$. The spectral bottom must therefore be an isolated eigenvalue of finite multiplicity. Choose an eigenfunction and normalize its $\mathcal H_N$ norm to one. Every operator-domain eigenfunction belongs to the form domain $H_0^1(\Om)$.

The modulus of an $H^1$ function belongs to $H^1$, has the same zero trace, and satisfies $|\nabla|u||\leq|\nabla u|$ almost everywhere. Replacing the eigenfunction by its modulus preserves its weighted norm and cannot increase its energy. Since its quotient already equals the infimum, its modulus also minimizes the quotient. Differentiating that quotient in each direction $\psi\in H_0^1(\Om)$ gives the weak equation
\begin{equation}\label{eq:attainment-weak}
 \int_{\Om}\nabla v_N\cdot\nabla\psi\dd\theta
 =C_+(N)\int_{\Om}\frac{v_N\psi}{a}\dd\theta.
\end{equation}
The weighted pairing is continuous by \eqref{eq:weight-control}, so all these variations are legitimate. This constructs a nonnegative normalized eigenfunction.

On every compact subset of $\Om$, the coefficient $C_+(N)/a$ is smooth and bounded. Interior elliptic regularity, iterated in the equation, gives a smooth representative of $v_N$. Also $-\Delta v_N=(C_+(N)/a)v_N\geq0$, because $C_+(N)>0$ by Proposition~\ref{prop:duality}. The strong maximum principle and connectedness imply $v_N>0$ throughout the interior.

To prove simplicity, take any nonzero real eigenfunction $u$ at the spectral bottom. Its modulus is a minimizer by the same argument, so $|u|$ has a strictly positive smooth representative. The function $u$ is continuous by interior regularity. If it changed sign, a path inside the cube joining points of opposite sign would contain a zero, contradicting positivity of $|u|$. Thus every nonzero real eigenfunction has a fixed sign. If the real eigenspace had dimension at least two, a linear combination could be chosen orthogonal in $\mathcal H_N$ to $v_N$. Such a nonzero function of fixed sign cannot be orthogonal to $v_N>0$. This contradiction proves real simplicity. Since the equation has real coefficients, taking real and imaginary parts proves complex simplicity as well.

A permutation of the coordinates preserves $a$, the cube, the Dirichlet form and the weighted norm. It therefore maps $v_N$ to a positive normalized eigenfunction for the same eigenvalue. Simplicity and normalization make that function equal to $v_N$, proving the final assertion.
\end{proof}

\subsection{The discrete coupling threshold}

The same number $\Cp$ has a different spectral role for the original lattice operator: it is the threshold in the coupling parameter $c$.

\begin{proposition}\label{prop:coupling}
For every $N\geq1$ and $c\in\R$,
\begin{equation}\label{eq:discrete-essential}
 \sigma_{\ess}(H_N(c))=[0,4N],
\end{equation}
and
\begin{equation}\label{eq:negative-iff}
 H_N(c)\text{ has a negative eigenvalue}
                  \quad\Longleftrightarrow\quad c>\Cp.
\end{equation}
In particular, $H_N(A_N)$ is nonnegative for $N=1,2$ and has a negative eigenvalue for every $N\geq3$.
\end{proposition}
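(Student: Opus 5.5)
The plan has two parts: the essential spectrum by a compact-perturbation argument, and the negative-eigenvalue criterion by the variational principle combined with Proposition~\ref{prop:duality}.

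\emph{Essential spectrum.} We have $\mathcal SL_+\mathcal S^*=M_a$ by \eqref{eq:transformed-operators}. Since $a$ is continuous on the closed cube with range $[0,4N]$, we get $\sigma(L_+)=\sigma_{\ess}(L_+)=[0,4N]$: a multiplication operator by a continuous nonconstant function has no eigenvalues of finite multiplicity in the interior of its range. The multiplier $M_{|n|^{-2}}$ is compact on $\ell^2(\N^N)$, being diagonal with entries tending to zero. Weyl's theorem then gives $\sigma_{\ess}(H_N(c))=[0,4N]$ for every real $c$.

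\emph{Negative eigenvalue criterion.} $H_N(c)$ is bounded and self-adjoint, and its essential spectrum has bottom $0$. So $H_N(c)$ has a negative eigenvalue exactly when $\inf\sigma(H_N(c))<0$. Indeed, any spectrum below $0$ lies outside the essential spectrum and therefore consists of isolated eigenvalues of finite multiplicity. By the variational principle, this happens iff there is $u\in\ell^2$ with $\mathcal E_+(u)-c\,\mathcal D_+(u)<0$. By density of $\cfin(\N^N)$ and boundedness of both forms on $\ell^2$, it suffices to find such $u$ among finitely supported functions.

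If $c>\Cp$, the definition \eqref{eq:C-def} gives a finitely supported $u$ with $\mathcal E_+(u)/\mathcal D_+(u)<c$, so a negative eigenvalue exists. This also covers $\Cp\le 0$, though in fact $\Cp>0$ by Proposition~\ref{prop:duality}. Conversely, if $c\le\Cp$, then for $c\ge0$ we have $\mathcal E_+\ge\Cp\mathcal D_+\ge c\,\mathcal D_+$ on $\cfin$, hence on $\ell^2$ by continuity. For $c<0$ both terms are nonnegative. In either case $H_N(c)\ge0$ and there is no negative eigenvalue.

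The final sentence follows from \eqref{eq:negative-iff} together with Theorem~\ref{thm:main}, which gives $C_+(1)=A_1$, $C_+(2)=A_2$, and $\Cp<A_N$ for $N\ge3$.

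The only delicate point is the essential spectrum of $M_a$. For each $\lambda\in[0,4N]$, the level set $\{a=\lambda\}$ meets the closed cube, so Weyl sequences of $L^2$-normalized bumps concentrating near a point of the level set inside $\Om$ work; the endpoints $0$ and $4N$ are then included because the essential spectrum is closed. Alternatively, one can note that the spectral measure of $M_a$ is the pushforward of Lebesgue measure under $a$, which is absolutely continuous with full support on $[0,4N]$.
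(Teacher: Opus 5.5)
Your proposal is correct and follows the paper's proof essentially step for step. The steps are: the sine transform turns $L_+$ into $M_a$, which has no eigenvalues of finite multiplicity; $M_{|n|^{-2}}$ is compact, so Weyl's theorem applies; and for the coupling criterion you combine the variational principle with discreteness of the spectrum below $0$. Your case split $c\ge0$ versus $c<0$ is harmless but unnecessary, since $\mathcal E_+-c\mathcal D_+=(\mathcal E_+-\Cp\mathcal D_+)+(\Cp-c)\mathcal D_+\ge0$ for every $c\le\Cp$, which is exactly how the paper writes it.
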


\begin{proof}
The sine transform represents $L_+$ by multiplication by $a$ on $L^2(\Om)$. The continuous function $a$ takes values in $(0,4N)$ and its closure of values is $[0,4N]$. Every open interval meeting $(0,4N)$ has a preimage of positive measure, so the essential range is exactly $[0,4N]$. A multiplication operator on this nonatomic space has no isolated eigenvalues of finite multiplicity. Hence
$\sigma(L_+)=\sigma_{\ess}(L_+)=[0,4N]$.

For $R>0$, truncate the multiplier $M_{|n|^{-2}}$ to $|n|\leq R$. The truncated operator has finite rank, and the norm of the difference is at most $R^{-2}$. Thus the multiplier is compact. Weyl's invariance theorem for compact self-adjoint perturbations gives \eqref{eq:discrete-essential} for every real $c$.

If $c\leq C_+(N)$, then for every finitely supported $u$,
\[
 \langle H_N(c)u,u\rangle
 =\mathcal E_+(u)-C_+(N)\mathcal D_+(u)
       +(C_+(N)-c)\mathcal D_+(u)\geq0.
\]
Both operators in $H_N(c)$ are bounded, so density extends the inequality to all of $\ell^2(\N^N)$. Therefore there is no negative spectrum.

If $c>C_+(N)$, the definition of the infimum supplies a nonzero finitely supported $u$ with $\mathcal E_+(u)/\mathcal D_+(u)<c$. Its expectation of $H_N(c)$ is negative. The variational principle implies $\inf\sigma(H_N(c))<0$. All spectrum below zero consists of isolated eigenvalues of finite multiplicity, since the essential spectrum is $[0,4N]$. There is therefore a negative eigenvalue. This proves the equivalence, and the assertion for $c=A_N$ follows from \eqref{eq:classification}.
\end{proof}

\begin{proof}[Proof of Theorem~\ref{thm:spectral-main}]
Proposition~\ref{prop:duality} identifies the discrete coefficient with the weighted Rayleigh infimum. The closed-form construction in Section~\ref{sec:duality} identifies that infimum with $\inf\sigma(K_N)$. Proposition~\ref{prop:essential} gives its essential spectrum. Proposition~\ref{prop:low-dimensions} proves nonattainment in dimensions one and two, whereas Corollary~\ref{cor:attainment} gives attainment, simplicity, positivity and permutation invariance in every higher dimension. Finally Proposition~\ref{prop:coupling} proves both assertions about the original discrete Schr\"odinger operator. These statements establish every part of the theorem.
\end{proof}

The relation between minimizing functions also involves form duality. If $v$ solves \eqref{eq:dual-euler}, the corresponding formal function on the sine side of the discrete equation is $f=v/a$, since $af=v$ and $Tv=\lambda f$. Membership of $v$ in $H_0^1(\Om)$ does not imply $v/a\in L^2(\Om)$. The existence statement in Corollary~\ref{cor:attainment} concerns the continuous form domain.

\section{A strict lower bound in dimension three}\label{sec:three}

The coefficient twelve admits a positive supersolution whose residual can be written as a sum of nonnegative terms. Because the dual infimum is attained below $49/4$, pairing this supersolution with the first eigenfunction yields a strict lower bound. The proof uses only exact algebra and the attainment already established.

Throughout this section $N=3$. Retain $S,Q_2,\tau$ from \eqref{eq:q-coordinates} and put
\begin{equation}\label{eq:third-moment}
 Q_3=\sum_{j=1}^3q_j^3,\qquad
 \upsilon=\frac{Q_3}{S^3},\qquad
 G=1+\frac{13}{54}S-\frac{Q_2}{3S},\qquad
 F=S^{-3/2}G.
\end{equation}
Since $Q_2\leq S$, we have $G\geq2/3+13S/54>0$.

\Needspace{14\baselineskip}
The next computation supplies an explicit residual and its sign.

\begin{lemma}\label{lem:three-certificate}
Let $\mathcal J_3$ be defined by \eqref{eq:Jacobi}. Then
\begin{equation}\label{eq:three-residual}
 \bigl[4S(3-\mathcal J_3)-12\bigr]F
                      =S^{-1/2}\mathcal P(S,\tau,\upsilon),
\end{equation}
where
\begin{align}
 \mathcal P(S,\tau,\upsilon)
 &=\frac79(3\tau-1)
 +S\left(\frac{13}{9}-\frac{71}{18}\tau
               +\frac{40}{3}\upsilon-\frac{35}{3}\tau^2\right)
 \label{eq:three-polynomial}\\
 &=\frac79(3\tau-1)(1-S\tau)\notag\\
 &\quad+S\left[
 4\left(\tau-\frac{85}{144}\right)^2
 +\frac{263}{5184}
 +\frac{40}{3}(\upsilon-\tau^2)\right]>0.
 \label{eq:three-sos}
\end{align}
\end{lemma}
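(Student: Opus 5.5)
The identity \eqref{eq:three-residual} is a direct computation with the operator $\mathcal J_3$, after which \eqref{eq:three-sos} is checked by expanding and positivity is read off from the decomposition. I would first split
\[
 F=S^{-3/2}+\tfrac{13}{54}S^{-1/2}-\tfrac13\,Q_2S^{-5/2}
\]
and apply $3-\mathcal J_3$ to each term separately. Pure powers of $S$ are handled by the computation already made in the proof of Lemma~\ref{lem:critical-residual}, now with $N=3$:
\[
 \frac{(3-\mathcal J_3)S^{-\beta}}{S^{-\beta}}=\frac{\beta(7/2-\beta)}{S}+3-3\beta+\beta(\beta+1)\tau .
\]
As a sanity check on normalisations, $\beta=3/2$ gives the contribution $4S(3-\mathcal J_3)S^{-3/2}-12S^{-3/2}=S^{-1/2}(15\tau-6)$.

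For the mixed term $Q_2S^{-5/2}$ I would use the product rule for the second-order operator $\mathcal J_3$,
\[
 \mathcal J_3(fg)=f\mathcal J_3g+g\mathcal J_3f+2\sum_j q_j(1-q_j)\,\partial_jf\,\partial_jg,
\]
together with two elementary evaluations. The first is
\[
 \mathcal J_3Q_2=\sum_j\bigl[2q_j(1-q_j)+(3-6q_j)q_j\bigr]=5S-8Q_2 .
\]
The second is that, for $f=S^{\alpha}$ and $g=Q_2$, the cross term equals $4\alpha S^{\alpha-1}(Q_2-Q_3)$. This cross term is exactly where the third moment, and hence $\upsilon=Q_3/S^3$, enters. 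Collecting everything, multiplying by $4S$, subtracting $12F$, and factoring out $S^{-1/2}$, every term becomes homogeneous in the form $S^0\cdot(\cdot)$ or $S^1\cdot(\cdot)$ in the variables $\tau$, $\upsilon$. One should then obtain the polynomial \eqref{eq:three-polynomial}. In particular, the $S^0$ part should reduce to $\tfrac79(3\tau-1)$; this is a useful consistency check, since the constant $13/54$ only affects the $S^1$ part.

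Next comes the algebraic identity between \eqref{eq:three-polynomial} and \eqref{eq:three-sos}. Expanding $\tfrac79(3\tau-1)(1-S\tau)$ produces the $S^0$ terms together with $S\bigl(-\tfrac73\tau^2+\tfrac79\tau\bigr)$. Expanding $4(\tau-85/144)^2+263/5184$ gives $4\tau^2-\tfrac{85}{18}\tau+\tfrac{4\cdot 85^2+263}{5184}$. I would then match the coefficients of $\tau^2$, $\tau$, $\upsilon$ and $1$ inside the $S$-bracket; the $\tau^2$ coefficients combine as $-\tfrac73+4-\tfrac{40}{3}=-\tfrac{35}{3}$, for instance.

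For positivity I would use three facts.
\begin{enumerate}[label=(\roman*)]
\item $3\tau-1\ge0$, since $\tau\ge1/N=1/3$ in dimension three.
\item $1-S\tau=1-Q_2/S>0$, because $Q_2\le(\max_j q_j)S<S$ when $0<q_j<1$.
\item $\upsilon\ge\tau^2$, which is the Cauchy--Schwarz inequality $Q_2^2\le SQ_3$, i.e.\ $\bigl(\sum_j q_j^{1/2}q_j^{3/2}\bigr)^2\le \sum_j q_j\sum_j q_j^3$.
\end{enumerate}
The square term is nonnegative, and the constant $263/5184$ together with $S>0$ makes the bracket, and hence $\mathcal P$, strictly positive.

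The main obstacle is purely bookkeeping: tracking the rational coefficients in the mixed term $Q_2S^{-5/2}$ and ensuring that the $Q_3$ terms assemble into exactly $\tfrac{40}{3}\upsilon$. I would organise that computation by writing every contribution as $S^{-1/2}$ times a combination of $1$, $\tau$, $\tau^2$, $\upsilon$ and $S^{-1}$, and checking that the $S^{-1}$ coefficients cancel. That cancellation is what the choice of exponent $-3/2$ and coefficient $-1/3$ is designed to produce.
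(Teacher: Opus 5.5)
Your route is the paper's proof. You use the same splitting $F=S^{-3/2}+\frac{13}{54}S^{-1/2}-\frac13Q_2S^{-5/2}$, the same product rule with $\mathcal J_3Q_2=5S-8Q_2$ and cross term $4rS^{r-1}(Q_2-Q_3)$, and the same three sign facts. Your $\tau\ge 1/3$, $Q_2<S$ and Cauchy--Schwarz $Q_2^2\le SQ_3$ are equivalent to the paper's sum-of-squares forms of $3\tau-1\ge0$, $1-S\tau>0$ and $\upsilon-\tau^2\ge0$. The route is sound. However, two of the checks you announce are wrong, and if you relied on them the bookkeeping would look inconsistent.

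\emph{The coefficient $13/54$ enters the $S^0$ part.} One has $4S(3-\mathcal J_3)S^{-1/2}-12S^{-1/2}=S^{-1/2}\bigl[-6+S(6+3\tau)\bigr]$, so that term contributes $-13/9$. Without it the $S^0$ part would be $\frac23+\frac73\tau$ rather than $\frac79(3\tau-1)$.

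\emph{The source of the $S^{-1}$ cancellation.} The terms of order $S^{-1}$ (after factoring out $S^{-1/2}$) come only from $S^{-3/2}$. They vanish because $4\beta(7/2-\beta)=12$ at $\beta=3/2$. The coefficients $-1/3$ and $13/54$ play no role in that cancellation. Instead, they are tied together by requiring the $S^0$ part to vanish at $\tau=1/3$.

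\emph{The constant in the completed square.} The constant term of $4(\tau-85/144)^2$ is $85^2/5184$, since $144^2=4\cdot 5184$. The constant in the bracket is therefore $(85^2+263)/5184=7488/5184=13/9$, not $(4\cdot 85^2+263)/5184$.
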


\begin{proof}
We compute the residual separately on the three terms of $F$ and then verify its positive decomposition. All coefficients below are rational.

The derivatives of $Q_2$ are $\partial_jQ_2=2q_j$ and $\partial_j^2Q_2=2$. Hence
\[
 \mathcal J_3Q_2
 =\sum_j\left[2q_j(1-q_j)+2q_j\left(\frac32-3q_j\right)\right]
 =5S-8Q_2.
\]
For smooth $f,g$, the product rule for \eqref{eq:Jacobi} is
\[
 \mathcal J_3(fg)=f\mathcal J_3g+g\mathcal J_3f
                 +2\sum_j q_j(1-q_j)\partial_jf\,\partial_jg.
\]
Applying it to $f=Q_2$, $g=S^r$, the last term is
$4r(Q_2-Q_3)S^{r-1}$. Together with \eqref{eq:J-radial}, this yields
\begin{align}
 \mathcal J_3S^r
 &=r\left(r+\frac72\right)S^{r-1}
        -3rS^r-r(r-1)Q_2S^{r-2},\label{eq:J-three-power}\\
 \mathcal J_3(Q_2S^r)
 &=5S^{r+1}-(8+3r)Q_2S^r
   +r\left(r+\frac{15}{2}\right)Q_2S^{r-1}\notag\\
 &\quad-r(r-1)Q_2^2S^{r-2}-4rQ_3S^{r-1}.
 \label{eq:J-three-mixed}
\end{align}

Write $\mathcal A=4S(3-\mathcal J_3)-12$. Substituting $r=-3/2,-1/2$ into \eqref{eq:J-three-power}, and $r=-5/2$ into \eqref{eq:J-three-mixed}, gives the three explicit expressions
\begin{align*}
 \mathcal A(S^{-3/2})
 &=S^{-1/2}(-6+15\tau),\\
 \mathcal A(S^{-1/2})
 &=S^{-1/2}\bigl[-6+S(6+3\tau)\bigr],\\
 \mathcal A(Q_2S^{-5/2})
 &=S^{-1/2}\bigl[-20+38\tau
                   +S(14\tau+35\tau^2-40\upsilon)\bigr].
\end{align*}
Since
$F=S^{-3/2}+(13/54)S^{-1/2}-(1/3)Q_2S^{-5/2}$,
the part of $S^{1/2}\mathcal AF$ independent of $S$ is
\[
 -6-\frac{13}{9}+\frac{20}{3}
       +\left(15-\frac{38}{3}\right)\tau
 =\frac79(3\tau-1).
\]
The coefficient of $S$ is
\[
 \frac{13}{54}(6+3\tau)
 -\frac13(14\tau+35\tau^2-40\upsilon)
 =\frac{13}{9}-\frac{71}{18}\tau
      -\frac{35}{3}\tau^2+\frac{40}{3}\upsilon.
\]
This proves \eqref{eq:three-residual} and \eqref{eq:three-polynomial}.

To obtain the positive decomposition, subtract
$\frac79(3\tau-1)(1-S\tau)$ from this polynomial. The remainder is
\[
 S\left[\frac{40}{3}(\upsilon-\tau^2)
                  +4\tau^2-\frac{85}{18}\tau+\frac{13}{9}\right].
\]
Completing the square gives
\[
 4\tau^2-\frac{85}{18}\tau+\frac{13}{9}
 =4\left(\tau-\frac{85}{144}\right)^2+\frac{263}{5184},
\]
which proves \eqref{eq:three-sos}.

It remains to verify every sign. Put $z_j=q_j/S$, so $z_j>0$, $\sum_jz_j=1$, $\tau=\sum_jz_j^2$ and $\upsilon=\sum_jz_j^3$. Expanding squares gives
\begin{equation}\label{eq:three-moment-signs}
 \begin{split}
 3\tau-1&=\sum_{i<j}(z_i-z_j)^2\geq0,\\
 1-S\tau&=\frac{S-Q_2}{S}
          =\frac{\sum_jq_j(1-q_j)}{S}>0,\\
 \upsilon-\tau^2&=\sum_jz_j(z_j-\tau)^2\geq0.
 \end{split}
\end{equation}
For the last identity, the expansion is
$\sum_jz_j^3-2\tau\sum_jz_j^2+\tau^2\sum_jz_j=\upsilon-\tau^2$.
Every term in \eqref{eq:three-sos} is therefore nonnegative, and the term $263S/5184$ is strictly positive. Thus $\mathcal P>0$ throughout the cube.
\end{proof}

The coefficients in \eqref{eq:third-moment} can be found within a two-parameter family. For
\[
 F_{\alpha,\eta}=S^{-3/2}\left(1+\alpha S+\eta\frac{Q_2}{S}\right),
\]
after factoring out $S^{-1/2}$, the part of the residual independent of $S$ is
\[
 -6-6\alpha-20\eta+(15+38\eta)\tau.
\]
Making it vanish at the symmetric value $\tau=1/3$ gives
$\alpha=-1/6-(11/9)\eta$. The choice $\eta=-1/3$ then yields
$\alpha=13/54$ and the positive decomposition in \eqref{eq:three-sos}.

We next put the supersolution in the form domain. This step makes the final comparison an exact identity between finite integrals.

\begin{proposition}\label{prop:strict-twelve}
The function
\begin{equation}\label{eq:three-supersolution}
 \Phi_*(\theta)=h(\theta)F(q(\theta))
\end{equation}
belongs to $H_0^1(\Omega_3)$, is positive in the interior, and satisfies weakly
\begin{equation}\label{eq:three-positive-remainder}
 \left(-\Delta-\frac{12}{a}\right)\Phi_*=R_*\Phi_*,
 \qquad R_*=\frac{\mathcal P}{4G}>0.
\end{equation}
The function $R_*$ is bounded. Moreover $C_+(3)>12$.
\end{proposition}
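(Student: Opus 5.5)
We first derive the pointwise equation from the conjugation identity and Lemma~\ref{lem:three-certificate}, then place $\Phi_*$ in the form domain, and finally pair with the minimizer from Corollary~\ref{cor:attainment}.

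\emph{Pointwise equation.} By \eqref{eq:conjugation} with $N=3$ and $a=4S$,
\[
 \Bigl(-\Delta-\frac{12}{a}\Bigr)\Phi_*=h\Bigl[(3-\mathcal J_3)F-\frac{3}{S}F\Bigr]
 =\frac{h}{4S}\bigl[4S(3-\mathcal J_3)-12\bigr]F
 =\frac{h}{4S}S^{-1/2}\mathcal P .
\]
Since $hF=hS^{-3/2}G$, this equals $\Phi_*\,\mathcal P/(4G)$. This gives \eqref{eq:three-positive-remainder} classically in the open cube. Positivity of $R_*$ follows from $\mathcal P>0$ and $G\ge 2/3$. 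Boundedness follows because $\tau,\upsilon\in[0,1]$ and $S\le 3$, so $\mathcal P$ is bounded. Positivity of $\Phi_*$ holds because $h>0$, $F>0$ inside.

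\emph{Form domain.} Near the corner, $F=S^{-3/2}G$ with $G$ bounded between positive constants, and $G$ depends on $\theta$ through the homogeneous-degree-zero quantity $Q_2/S$ plus smooth terms. So $\Phi_*\sim r^{3}\cdot r^{-3}\,Y_3(\omega)\cdot(\text{bounded angular factor})$, i.e. $\Phi_*=O(1)$ with $|\nabla\Phi_*|=O(r^{-1})$. Since $\int_0 r^{-2}r^{2}\,dr<\infty$ in three dimensions, $\Phi_*\in H^1$ near the corner. I will check the gradient bound by differentiating $Q_2/S$ (whose gradient is $O(r^{-1})$) and $S^{-3/2}h$. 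Away from the corner all factors are smooth up to the closed cube, except that $q_j\to1$ at $\theta_j=\pi$. However, $F$ is a smooth function of $q$ with $S$ bounded below there, and $q$ is smooth in $\theta$, so no problem arises. The factor $h$ gives zero trace on every face. Hence $\Phi_*\in H_0^1(\Omega_3)$, e.g.\ via the corner cutoff argument of Lemma~\ref{lem:cutoffs}: multiply by a cutoff vanishing for $r<\varepsilon$, whose gradient cost $\int|\nabla\chi_\varepsilon|^2\Phi_*^2\lesssim \varepsilon^{-2}\varepsilon^3\to0$.

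\emph{Weak equation.} For $\psi\in H_0^1$ I want $\int\nabla\Phi_*\cdot\nabla\psi=\int(12/a+R_*)\Phi_*\psi$. I prove it first for $\psi\in C_c^\infty(\Omega_3)$, where it is classical. I then extend by density, using that both sides are continuous in $\psi\in H_0^1$: the right side via \eqref{eq:weight-control} applied to $\Phi_*$ and $\psi$ (Cauchy--Schwarz in $\mathcal H_3$) together with boundedness of $R_*$.

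\emph{Strict bound.} Take $v_3>0$ from Corollary~\ref{cor:attainment}, which applies since $C_+(3)<49/4$. Test the weak equation of $v_3$ with $\psi=\Phi_*$, and that of $\Phi_*$ with $\psi=v_3$. The gradient pairings coincide, so
\[
 C_+(3)\int\frac{v_3\Phi_*}{a}=12\int\frac{v_3\Phi_*}{a}+\int R_*\Phi_*v_3 .
\]
The last integral is strictly positive, since the integrand is positive everywhere. The weighted pairing is finite and positive. Hence $C_+(3)>12$.

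The main obstacle I expect is the corner regularity bookkeeping, namely confirming $\nabla\Phi_*\in L^2$ near $\theta=0$. The rest is the algebra already done in Lemma~\ref{lem:three-certificate}.
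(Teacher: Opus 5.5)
Your proposal is correct and follows the paper's proof essentially step for step: the pointwise equation from \eqref{eq:conjugation} and Lemma~\ref{lem:three-certificate}, the corner bounds $|\Phi_*|\le C$ and $|\nabla\Phi_*|\le Cr^{-1}$ combined with a linear corner cutoff to get $\Phi_*\in H_0^1(\Omega_3)$, extension of the weak equation by density using \eqref{eq:weight-control}, and the symmetric pairing with $v_3$ from Corollary~\ref{cor:attainment}. One harmless slip: $Q_2/S=S\tau$ is not homogeneous of degree zero; it is $O(r^2)$ with gradient $O(r)$, since $|\partial_{q_i}(Q_2/S)|\le 3$ and $\partial_{\theta_i}q_i=O(r)$, so your $O(r^{-1})$ gradient estimate is only wasteful and still gives $|\nabla\Phi_*|=O(r^{-1})$.
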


\begin{proof}
The lower bound $G\geq2/3+13S/54$ follows from $Q_2\leq S$, since $0<q_j<1$. Thus $F$ and $\Phi_*=hF$ are positive in the interior. By \eqref{eq:conjugation} and Lemma~\ref{lem:three-certificate},
\[
 (3-\mathcal J_3)F-\frac3S F
 =\frac14S^{-3/2}\mathcal P
 =\frac{\mathcal P}{4G}F.
\]
As $a=4S$, multiplication by $h$ proves the pointwise equation \eqref{eq:three-positive-remainder}. The quantities $S,\tau,\upsilon$ are bounded, and $G\geq2/3$, so $R_*$ is bounded. Lemma~\ref{lem:three-certificate} gives $R_*>0$.

We next verify the form-domain assertion. Near the corner, write $r=|\theta|$. Then $S\asymp r^2$, $|\nabla S|=O(r)$, $h=O(r^3)$, and $|\nabla h|=O(r^2)$. Also $Q_2\leq S^2$, so $G=1+O(S)=1+O(r^2)$. Its derivatives are controlled by
\[
 \partial_{q_i}\left(\frac{Q_2}{S}\right)
 =\frac{2q_iS-Q_2}{S^2},\qquad
 \left|\partial_{q_i}\left(\frac{Q_2}{S}\right)\right|\leq3.
\]
Since $\partial_{\theta_i}q_i=O(r)$, this implies $|\nabla G|=O(r)$. Differentiating $hS^{-3/2}G$ now gives
\[
 |\Phi_*|\leq C,\qquad
 |\nabla\Phi_*|\leq C r^{-1}.
\]
The three-dimensional volume element is $r^2\dd r\dd\omega$, and hence
\[
 \int_{\{r<\varepsilon\}}|\nabla\Phi_*|^2\dd\theta\leq C\varepsilon,
 \qquad
 \int_{\{r<\varepsilon\}}\frac{\Phi_*^2}{a}\dd\theta\leq C\varepsilon.
\]
These estimates show that $\Phi_*\in H^1$ and that its weighted mass is finite near the corner.

To prove the zero trace without making a pointwise assertion at the singular corner, take a radial cutoff $\xi_\varepsilon$ which is zero on $r\leq\varepsilon$, one on $r\geq2\varepsilon$, and satisfies $|\nabla\xi_\varepsilon|\leq C/\varepsilon$. The function $\xi_\varepsilon\Phi_*$ is smooth up to the remaining boundary and vanishes there because of $h$, so it belongs to $H_0^1(\Omega_3)$. Moreover,
\begin{align*}
 \|\nabla(\Phi_*-\xi_\varepsilon\Phi_*)\|_2^2
 &\leq 2\int_{\{r<2\varepsilon\}}|\nabla\Phi_*|^2\dd\theta
     +\frac{C}{\varepsilon^2}
        \int_{\{r<2\varepsilon\}}|\Phi_*|^2\dd\theta\\
 &\leq C\varepsilon.
\end{align*}
The $L^2$ difference also tends to zero. Since $H_0^1$ is closed in $H^1$, this proves $\Phi_*\in H_0^1(\Omega_3)$.

The pointwise equation gives its weak form for compactly supported smooth tests. Each term extends continuously to $H_0^1$: for the singular term, Cauchy--Schwarz and \eqref{eq:weight-control} give
\[
 \left|\int_{\Omega_3}\frac{\Phi_*\psi}{a}\dd\theta\right|
 \leq\wform[\Phi_*]^{1/2}\wform[\psi]^{1/2},
\]
and the term involving $R_*$ is bounded by
$\|R_*\|_\infty\|\Phi_*\|_2\|\psi\|_2$. Density therefore proves \eqref{eq:three-positive-remainder} for every form-domain test.

Let $v_3$ be the positive first eigenfunction from Corollary~\ref{cor:attainment}. Both $v_3$ and $\Phi_*$ are admissible as tests in each other's weak equations. These give, respectively,
\begin{align*}
 \int_{\Omega_3}\nabla v_3\cdot\nabla\Phi_*\dd\theta
 &=C_+(3)\int_{\Omega_3}\frac{v_3\Phi_*}{a}\dd\theta,\\
 \int_{\Omega_3}\nabla\Phi_*\cdot\nabla v_3\dd\theta
 &=12\int_{\Omega_3}\frac{\Phi_*v_3}{a}\dd\theta
      +\int_{\Omega_3}R_*\Phi_*v_3\dd\theta.
\end{align*}
Subtracting yields
\begin{equation}\label{eq:strict-pairing}
 \bigl(C_+(3)-12\bigr)
       \int_{\Omega_3}\frac{\Phi_*v_3}{a}\dd\theta
       =\int_{\Omega_3}R_*\Phi_*v_3\dd\theta>0.
\end{equation}
The weighted integral is finite by the preceding Cauchy--Schwarz estimate and strictly positive because both functions are positive. The right-hand side is finite by boundedness of $R_*$ and is strictly positive. Dividing by the positive weighted integral proves $C_+(3)>12$.
\end{proof}

\begin{proof}[Completion of the proof of Theorem~\ref{thm:main}]
The classification \eqref{eq:classification} was proved in Section~\ref{sec:transition}. In dimension three it gives $C_+(3)<A_3=49/4$, using the negative identity \eqref{eq:three-total-defect}. Proposition~\ref{prop:strict-twelve} now gives the other strict inequality, $12<C_+(3)$. This proves \eqref{eq:three-bounds} and completes the theorem.
\end{proof}

Attainment is decisive in the last argument: it turns the positive residual into the strict identity \eqref{eq:strict-pairing}. In dimensions one and two, the positive remainders coexist with equality at the essential threshold because there is no minimizing function in the form domain.

\section{A representation on a product of spheres}\label{sec:geometry}

The coefficient $A_N$ can also be understood through an exact geometric representation. Each Dirichlet coordinate contributes a three-dimensional sphere after conjugation by its sine factor. The resulting product has local dimension $3N$, and the lattice symbol becomes a squared chordal distance.

Let
\[
 M_N=(\mathbb S^3)^N
\]
with the product metric and unit sphere radii. Fix a pole $P_j$ in each factor, and write $P=(P_1,\ldots,P_N)$. For $X=(X_1,\ldots,X_N)\in M_N$, put
\begin{equation}\label{eq:chordal}
 a_M(X)=\sum_{j=1}^N|X_j-P_j|_{\R^4}^2.
\end{equation}
If $\theta_j$ is the polar angle from $P_j$, then
$a_M=\sum_j2(1-\cos\theta_j)=a(\theta)$. The distance in \eqref{eq:chordal} is the ambient chordal distance in the product embedding into $\R^{4N}$.

\begin{theorem}\label{thm:product-spheres}
For every $N\geq1$,
\begin{equation}\label{eq:sphere-quotient}
 \Cp=\inf_{0\ne F\in H^1(M_N)}
 \frac{\displaystyle\int_{M_N}\bigl(|\nabla_{M_N}F|^2+N|F|^2\bigr)\dd V}
      {\displaystyle\int_{M_N}\frac{|F|^2}{a_M}\dd V}.
\end{equation}
The infimum is unchanged when restricted to functions depending only on the polar angles $\theta_1,\ldots,\theta_N$.
\end{theorem}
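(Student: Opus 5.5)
Write points of $M_N$ in polar form on each factor, $X_j=(\cos\theta_j,\sin\theta_j\,\sigma_j)$ with $\theta_j\in(0,\pi)$ and $\sigma_j\in\mathbb S^2$. The volume element is then $\prod_j\sin^2\theta_j\dd\theta_j\dd\sigma_j$. Since the weight $a_M=a(\theta)$ depends only on the polar angles, the first step is to reduce to zonal functions. Expand $F$ in spherical harmonics of the $\mathbb S^2$ variables $\sigma_j$.

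The energy and the weighted mass split orthogonally over the harmonic degrees $(\ell_1,\ldots,\ell_N)$, because the weight does not depend on $\sigma$. A nonzero degree $\ell_j$ adds the nonnegative term $\ell_j(\ell_j+1)/\sin^2\theta_j$ to the radial energy, so it can only increase the quotient. A cleaner route to the same conclusion is to average $|F|$: set $\tilde F=(\int|F|^2\dd\sigma)^{1/2}$. Then $\tilde F$ is zonal, $\int|\tilde F|^2/a_M$ is unchanged, and by Cauchy--Schwarz $|\partial_{\theta_j}\tilde F|^2\le\int|\partial_{\theta_j}F|^2\dd\sigma$ while the $\sigma$-gradients are dropped. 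Hence the energy does not increase, and the infimum may be taken over zonal $F$, giving the final assertion.

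For zonal $F=F(\theta)$, put $v=hF$ with $h=\prod_j\sin\theta_j$. The sphere measure is $h^2\dd\theta$ up to the constant $(4\pi)^N$, which cancels in the quotient. The ground-state computation from Lemma~\ref{lem:ground-state} applies with $-\Delta h=Nh$. For $v$ compactly supported in $\Om$ it gives
\[
 \int_{\Om}|\nabla v|^2\dd\theta=\int_{\Om}h^2|\nabla F|^2\dd\theta+N\int_{\Om}h^2|F|^2\dd\theta,
\]
and $\int h^2|F|^2/a\dd\theta=\wform[v]$. This is precisely the identity \eqref{eq:conjugation} in variational form, because the radial part of $\Delta_{\mathbb S^3}$ is $\partial_\theta^2+2\cot\theta\,\partial_\theta$. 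The sphere quotient of $F$ therefore equals $\qform[v]/\wform[v]$, and Proposition~\ref{prop:duality} identifies the infimum with $\Cp$.

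The main obstacle is matching the function spaces. The map $F\mapsto hF$ must send zonal $H^1(M_N)$ onto $H_0^1(\Om)$, at least densely in both form norms. Several issues arise here. First, $H^1(M_N)$ carries no boundary condition, while $h$ vanishes at the poles $\theta_j\in\{0,\pi\}$. Second, the weighted mass is finite only by Lemma~\ref{lem:local-hardy}; on $M_N$ the singular point $P$ has codimension $3N\ge3$, and the analogous Hardy inequality there plays the same role. Third, for $N=1$ the codimension is $3$, and I expect one needs only that both forms are closed with smooth compactly supported functions dense in each.

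I would handle this by density. Zonal functions smooth on $M_N$ and vanishing near $P$ and near the sets $\{\theta_j=0\}\cup\{\theta_j=\pi\}$ are dense in zonal $H^1(M_N)$, since these sets have codimension $3$ and therefore capacity zero. They are also dense for the weighted norm, by the local Hardy bound. Their images $hF$ lie in $C_c^\infty(\Om)$, which is dense in $H_0^1(\Om)$. Conversely, each $v\in C_c^\infty(\Om)$ gives $F=v/h$ smooth and compactly supported away from the poles. The identity between the two quotients therefore holds on mutually corresponding dense cores. Together with the continuity of $\wform$ from \eqref{eq:weight-control}, this makes both infima equal, proving \eqref{eq:sphere-quotient}.
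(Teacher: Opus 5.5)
Your proposal is correct and follows essentially the paper's route. Your fibre average $\tilde F=(\int|F|^2\dd\sigma)^{1/2}$ is the paper's Haar average $\overline F$ over $\mathrm{SO}(3)^N$, and the identity for $v=hF$ is the same ground-state computation with $-\Delta h=Nh$. The manifold Hardy inequality at $P$, which the paper proves in normal coordinates in dimension $3N$, is exactly what makes the weighted integral finite on all of $H^1(M_N)$. The only variation is in matching the form domains. You use corresponding dense cores: $C_c^\infty(\Om)$ and zonal functions vanishing near the codimension-three pole sets. The paper instead shows that $F\mapsto hF$ is, up to $(4\pi)^N$, an isometry of the polar subspace onto $H_0^1(\Om)$, and it also supplies the $\varepsilon$-regularization that makes your Cauchy--Schwarz step rigorous where $\tilde F$ vanishes.
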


\begin{proof}
We first identify the quotient on functions invariant under rotations fixing the poles. We then remove this restriction by an averaging operation that preserves the denominator.

\emph{The identity for polar functions.}
On the unit sphere $\mathbb S^3$, polar coordinates give the metric
$\dd\theta^2+\sin^2\theta\,g_{\mathbb S^2}$. Integrating over $\mathbb S^2$ gives the volume factor $4\pi\sin^2\theta\dd\theta$. For a function of $\theta$ alone, its gradient squared is $|F'|^2$ and its Laplacian is
\[
 \frac1{\sin^2\theta}\frac{\mathrm d}{\mathrm d\theta}
       \left(\sin^2\theta\frac{\mathrm dF}{\mathrm d\theta}\right)
 =F''+2\cot\theta F'.
\]
For a smooth function on the product depending only on the polar angles, let $v=hF$. Fix a coordinate $j$ and integrate by parts in that coordinate. The product rule gives
\[
 |\partial_j(hF)|^2
 =h^2|\partial_jF|^2+|F|^2|\partial_jh|^2
       +h\partial_jh\,\partial_j|F|^2.
\]
The boundary term $h\partial_jh|F|^2$ vanishes at both endpoints because $h$ has a sine factor and $F$ is smooth on the compact manifold. The last two terms therefore integrate to
$-\int h\partial_j^2h|F|^2=\int h^2|F|^2$, since $\partial_j^2h=-h$. Summing over $j$ proves
\begin{align}
 \int_{\Om}|\nabla v|^2\dd\theta
 &=\int_{\Om}h^2\bigl(|\nabla F|^2+N|F|^2\bigr)\dd\theta,
 \label{eq:sphere-energy}\\
 \int_{\Om}\frac{|v|^2}{a}\dd\theta
 &=\int_{\Om}\frac{h^2|F|^2}{a}\dd\theta.
 \label{eq:sphere-weight}
\end{align}
The manifold integrals are $(4\pi)^N$ times the corresponding integrals on the right, and $a_M=a$. The common volume factor cancels in the quotient.

\emph{The form domains.}
Let $\mathcal G=\mathrm{SO}(3)^N$ act by rotations fixing the poles. The polar subspace $H^1_{\rm pol}(M_N)$ is its fixed-point subspace and is closed. Smooth polar functions are dense in it: approximate in $H^1$ by smooth functions and average linearly over the compact group. The group acts isometrically on $H^1$, so this averaging is continuous and preserves smoothness.

Equation \eqref{eq:sphere-energy} shows that multiplication by $h$ is an isometry, up to the factor $(4\pi)^N$, from the norm
\[
 \left(\int_{M_N}(|\nabla F|^2+N|F|^2)\dd V\right)^{1/2}
\]
on the polar subspace to the Dirichlet energy norm on its image. Both are complete norms, equivalent to the respective $H^1$ and $H_0^1$ norms. Hence the map extends continuously into $H_0^1(\Om)$ and has closed image. That image contains $C_c^\infty(\Om)$: for such a $v$, the function $F=v/h$ vanishes near every polar endpoint, and thus extends as a smooth polar function on the product of spheres. The image is therefore all of $H_0^1(\Om)$. The weight identity extends by \eqref{eq:weight-control}, or by applying it to differences of smooth approximants. Proposition~\ref{prop:duality} now proves \eqref{eq:sphere-quotient} when the infimum is restricted to polar functions.

\emph{Finiteness of the weighted integral for arbitrary functions.}
The manifold dimension is $d=3N\geq3$. In normal coordinates $x$ at the joint pole $P$, the metric and volume density are uniformly comparable to their Euclidean counterparts on a small ball, and $a_M$ is comparable to $|x|^2$. Choose a smooth cutoff supported in that ball and equal to one on a smaller ball. Apply the Euclidean Hardy inequality with coefficient $(d-2)^2/4$ to the cutoff times a smooth $F$. This inequality follows from \eqref{eq:radial-hardy-identity} in dimension $d$, after dropping the nonnegative angular energy; a cutoff at the origin is removed by density since $d\geq3$. The derivatives of the cutoff are supported away from $P$, and the weight is bounded outside the smaller ball. These observations give
\begin{equation}\label{eq:manifold-weight-control}
 \int_{M_N}\frac{|F|^2}{a_M}\dd V
 \leq C\int_{M_N}(|\nabla F|^2+|F|^2)\dd V.
\end{equation}
Smooth density extends this estimate to $H^1(M_N)$. In particular, all quotients in the theorem are well defined.

\emph{Removal of the polar restriction.}
Normalize Haar measure on $\mathcal G$ to have mass one, and set
\[
 \overline F(X)=\left(\int_{\mathcal G}|F(gX)|^2\dd g\right)^{1/2}.
\]
The group orbits are exactly the sets with prescribed polar angles, so $\overline F$ is polar. Invariance of volume and of $a_M$, followed by Fubini's theorem, gives
\begin{align}
 \int_{M_N}|\overline F|^2\dd V&=\int_{M_N}|F|^2\dd V,
 \label{eq:average-mass}\\
 \int_{M_N}\frac{|\overline F|^2}{a_M}\dd V
 &=\int_{M_N}\frac{|F|^2}{a_M}\dd V.
 \label{eq:average-weight}
\end{align}
For smooth $F$, regularize the square root by
$F_\varepsilon=(\varepsilon^2+\int_{\mathcal G}|F(gX)|^2\dd g)^{1/2}$. Differentiation and Cauchy--Schwarz imply
\[
 |\nabla F_\varepsilon|^2
 \leq\frac{\int_{\mathcal G}|F(gX)|^2\dd g}
              {\varepsilon^2+\int_{\mathcal G}|F(gX)|^2\dd g}
       \int_{\mathcal G}|\nabla(F\circ g)(X)|^2\dd g
 \leq\int_{\mathcal G}|\nabla(F\circ g)(X)|^2\dd g.
\]
Since the action is isometric, integration gives
$\int|\nabla F_\varepsilon|^2\leq\int|\nabla F|^2$. As $\varepsilon\downarrow0$, $F_\varepsilon\to\overline F$ in $L^2$; weak compactness in $H^1$ and lower semicontinuity prove
\begin{equation}\label{eq:average-energy}
 \int_{M_N}|\nabla\overline F|^2\dd V
 \leq\int_{M_N}|\nabla F|^2\dd V.
\end{equation}

For general $F\in H^1$, take smooth $F_m\to F$ in $H^1$. The triangle inequality in $L^2(\mathcal G)$ gives pointwise
\[
 |\overline F_m-\overline F|^2
 \leq\int_{\mathcal G}|F_m(gX)-F(gX)|^2\dd g.
\]
Thus the averaged functions converge in $L^2$ and also in the weighted $L^2$ norm, by invariance and \eqref{eq:manifold-weight-control}. Their gradients are bounded by \eqref{eq:average-energy}. Passing to a weakly convergent subsequence proves that $\overline F\in H^1_{\rm pol}$ and that \eqref{eq:average-energy} holds for $F$. Equations \eqref{eq:average-mass}--\eqref{eq:average-energy} preserve both mass terms and decrease the gradient term. If $F\ne0$, then $\overline F\ne0$ and its quotient is no larger. The unrestricted infimum is consequently at least the polar infimum; the opposite inequality follows by inclusion. This proves the theorem.
\end{proof}

The geometry reproduces the identities used in the analytic proof. Write
$a_j=2(1-\cos\theta_j)$, so that $a_M=\sum_ja_j$. Then
\begin{equation}\label{eq:sphere-geometric-identities}
 \Delta_{M_N}a_M=6N-3a_M,
 \qquad
 |\nabla_{M_N}a_M|^2=4a_M-\sum_{j=1}^Na_j^2.
\end{equation}
The first identity follows from $\Delta_{\mathbb S^3}\cos\theta=-3\cos\theta$, and the second from $|\nabla a_j|^2=4a_j-a_j^2$. The term $\sum_ja_j^2$ retains the product geometry; in the coordinates of Section~\ref{sec:transition} it is $16Q_2$.

At the joint pole, the manifold has dimension $3N$ and $a_M$ agrees to leading order with the squared geodesic distance. Its local inverse-square threshold is therefore $(3N-2)^2/4$. The bounded mass term $N$ in \eqref{eq:sphere-quotient} does not change that local coefficient. The global geometry, recorded by the residual in \eqref{eq:critical-residual}, determines the strict decrease in dimensions at least three.

\Needspace{11\baselineskip}
The same correspondence can be seen at the level of orthogonal bases. The polar functions
\[
 \chi_n(\theta)=\frac{\sin(n\theta)}{\sin\theta},\qquad n\geq1,
\]
are orthonormal in $L^2((0,\pi),(2/\pi)\sin^2\theta\dd\theta)$ and satisfy
\[
 (-\Delta_{\mathbb S^3}+1)\chi_n=n^2\chi_n,
 \qquad 2\cos\theta\,\chi_n=\chi_{n+1}+\chi_{n-1},\qquad\chi_0=0.
\]
Indeed the polar Laplacian satisfies
\[
 (-\Delta_{\mathbb S^3}+1)F
 =-\frac1{\sin\theta}\frac{\mathrm d^2}{\mathrm d\theta^2}
       (\sin\theta\,F).
\]
Taking $F=\chi_n$ proves the eigenvalue equation, while the sine addition formula proves the recurrence. Orthonormality reduces to that of the sine basis after cancelling the two factors of $\sin\theta$. Tensorization identifies multiplication by $|n|^2$ with $-\Delta_{M_N}+N$ on the polar subspace and identifies $L_+$ with multiplication by $a_M$. This gives another form of the operator correspondence behind Proposition~\ref{prop:duality}.

\section{Further comparison problems}\label{sec:questions}

The preceding results distinguish three tasks: determining whether a continuous sharp constant survives discretization, locating the transition when it fails, and evaluating the new discrete optimum. The present paper completes the first two tasks on the positive lattice. An explicit evaluation of $C_+(3)$, and more generally of $C_+(N)$ for $N\geq3$, remains open. The attained weighted eigenvalue problem uniquely selects the first eigenfunction, but the residual identity by itself does not evaluate its eigenvalue.

\subsection{Varying the number of Dirichlet faces}

A natural family interpolates between the full lattice and the positive lattice:
\[
 \mathbb Z^{N-k}\times\N^k,\qquad 0\leq k\leq N.
\]
For $N\geq3$, the corresponding continuous region
$\R^{N-k}\times(0,\infty)^k$ has sharp inverse-square coefficient
\[
 A_{N,k}=\frac{(N+2k-2)^2}{4};
\]
this is the family treated by Su and Yang \cite{SuYang}. At $k=0$, the discrete problem is taken with the zero value at the origin, as in \cite{LizamaEarly,Alpay,LizamaFull}; at $k>0$, Dirichlet conditions are imposed on the coordinate faces. The two endpoints have different transition dimensions. It is therefore natural to determine the equality region in the pair $(N,k)$ and to ask whether tangential identities can characterize its boundary. Such a result would separate the effects of ambient dimension and the number of boundary constraints.

\subsection{Varying the weight and the energy}

The weighted half-line results of \cite{GuptaWeighted,Barki} show that weight parameters can govern preservation of the continuous coefficient even in one dimension. General weighted discrete inequalities also admit sharp remainder terms and quantitative stability estimates, as shown by Yessirkegenov and Zhangirbayev \cite{YessirkegenovZhangirbayev}. In arbitrary dimension, the power-weighted existence results of Dyda \cite{Dyda} identify families for which the sharp comparison can be investigated. The Bessel-pair theory in \cite{GhoussoubMoradifam} supplies a larger collection of continuous weighted inequalities whose optimal coefficients can be characterized by an ordinary differential equation. For each such comparison, the lattice energy, the sampled or otherwise specified discrete weight, and the boundary convention must first be fixed. The resulting question is whether the exact continuous coefficient persists for that discretization and, if it does not, what global quantity detects the loss.

The trigonometric weight $a^{-1}$ in the present paper has a particularly useful feature: its continuous variational problem is exactly equivalent to the discrete one. Analogous exact correspondences would allow continuous tools to determine discrete coefficients. The product-of-spheres representation shows at the same time that local dimension alone does not settle the global problem. A general theory would need to account for the full geometry or symbol, in addition to its quadratic behaviour near the singular point.

\subsection{Varying symmetry and order}

The antisymmetric inequalities in \cite{GuptaAntisymmetric} and the higher-order inequalities in \cite{HuangYe,Gupta} give further families with continuous benchmarks. The large-dimensional asymptotics in \cite{GuptaAsymptotic,HuangYeAsymptotic} establish a precise regime of separation for higher orders. One can ask for the exact equality ranges and transition parameters in dimension, order or symmetry class. The associated admissible spaces and singular sets must be retained in the comparison. The present identity suggests a concrete strategy: identify a critical continuous profile, calculate its defect for an exact transformed problem, and seek a global identity whose sign decides whether the benchmark remains optimal. Establishing such identities beyond the positive lattice is a substantive open step.

\section*{Data availability}
No datasets were generated or analysed during this study. 

\section*{AI Declaration}

During the preparation of this manuscript, the author used OpenAI's ChatGPT-5.6 to assist with language editing, improve the clarity and organization of the exposition, and provide supportive checks for certain mathematical calculations and arguments. All mathematical statements, proofs, derivations, and conclusions are solely the responsibility of the author, who has rigorously verified them.


\begin{thebibliography}{99}

\bibitem{AlpayEarly}
N. Alpay,
\emph{The sharp discrete Hardy inequality on $\mathbb Z^3$},
preprint (2026), \href{https://arxiv.org/abs/2608.25262}{arXiv:2608.25262}.

\bibitem{Alpay}
N. Alpay,
\emph{Sharp discrete Hardy constants in low dimensions},
preprint (2026), \href{https://arxiv.org/abs/2610.05421}{arXiv:2610.05421}.

\bibitem{Barki}
A. Barki,
\emph{Sharp weighted discrete $p$-Hardy inequality and stability},
preprint (2024), revised 2026,
\href{https://arxiv.org/abs/2501.00299}{arXiv:2501.00299}.

\bibitem{Davies}
E.~B. Davies,
\emph{A review of Hardy inequalities},
in \emph{The Maz'ya Anniversary Collection}, vol.~2,
Operator Theory: Advances and Applications, vol.~110,
Birkh\"auser, Basel, 1999, pp.~55--67.
\href{https://doi.org/10.1007/978-3-0348-8672-7_5}{doi:10.1007/978-3-0348-8672-7\_5}.

\bibitem{Dyda}
B. Dyda,
\emph{Fractional and non-fractional Hardy inequality on a lattice $\mathbb Z^d$},
J. Differential Equations \textbf{485} (2026), article~114749.
\href{https://doi.org/10.1016/j.jde.2026.114749}{doi:10.1016/j.jde.2026.114749}.

\bibitem{FrankSeiringer}
R.~L. Frank and R. Seiringer,
\emph{Non-linear ground state representations and sharp Hardy inequalities},
J. Funct. Anal. \textbf{255} (2008), no.~12, 3407--3430.
\href{https://doi.org/10.1016/j.jfa.2008.05.015}{doi:10.1016/j.jfa.2008.05.015}.

\bibitem{GhoussoubMoradifam}
N. Ghoussoub and A. Moradifam,
\emph{Bessel pairs and optimal Hardy and Hardy--Rellich inequalities},
Math. Ann. \textbf{349} (2011), no.~1, 1--57.
\href{https://doi.org/10.1007/s00208-010-0510-x}{doi:10.1007/s00208-010-0510-x}.

\bibitem{GuptaWeighted}
S. Gupta,
\emph{Discrete weighted Hardy inequality in 1-D},
J. Math. Anal. Appl. \textbf{514} (2022), no.~2, article~126345.
\href{https://doi.org/10.1016/j.jmaa.2022.126345}{doi:10.1016/j.jmaa.2022.126345}.

\bibitem{Gupta}
S. Gupta,
\emph{Hardy and Rellich inequality on lattices},
Calc. Var. Partial Differential Equations \textbf{62} (2023), article~81.
\href{https://doi.org/10.1007/s00526-022-02407-0}{doi:10.1007/s00526-022-02407-0}.

\bibitem{GuptaAntisymmetric}
S. Gupta,
\emph{Hardy inequalities for antisymmetric functions},
Nonlinear Anal. \textbf{248} (2024), article~113619.
\href{https://doi.org/10.1016/j.na.2024.113619}{doi:10.1016/j.na.2024.113619}.

\bibitem{GuptaAsymptotic}
S. Gupta,
\emph{Sharp asymptotics for higher-order Hardy constants on lattices},
preprint (2026), \href{https://arxiv.org/abs/2607.15181}{arXiv:2607.15181}.

\bibitem{HLP}
G.~H. Hardy, J.~E. Littlewood and G. P\'olya,
\emph{Inequalities}, 2nd ed., Cambridge University Press, Cambridge, 1952.

\bibitem{HuangYe}
X. Huang and D. Ye,
\emph{One-dimensional sharp discrete Hardy--Rellich inequalities},
J. Lond. Math. Soc. \textbf{109} (2024), no.~1, e12838.
\href{https://doi.org/10.1112/jlms.12838}{doi:10.1112/jlms.12838}.

\bibitem{HuangYeAsymptotic}
X. Huang and D. Ye,
\emph{Asymptotically sharp Hardy--Rellich inequalities on lattices},
preprint (2026), \href{https://arxiv.org/abs/2607.23057}{arXiv:2607.23057}.

\bibitem{KapitanskiLaptev}
L. Kapitanski and A. Laptev,
\emph{On continuous and discrete Hardy inequalities},
J. Spectr. Theory \textbf{6} (2016), no.~4, 837--858.
\href{https://doi.org/10.4171/JST/144}{doi:10.4171/JST/144}.

\bibitem{KellerLemm}
M. Keller and M. Lemm,
\emph{Optimal Hardy weights on the Euclidean lattice},
Trans. Amer. Math. Soc. \textbf{376} (2023), no.~9, 6033--6062.
\href{https://doi.org/10.1090/tran/8939}{doi:10.1090/tran/8939}.

\bibitem{KPP}
M. Keller, Y. Pinchover and F. Pogorzelski,
\emph{Optimal Hardy inequalities for Schr\"odinger operators on graphs},
Comm. Math. Phys. \textbf{358} (2018), no.~2, 767--790.
\href{https://doi.org/10.1007/s00220-018-3107-y}{doi:10.1007/s00220-018-3107-y}.

\bibitem{KPPRellich}
M. Keller, Y. Pinchover and F. Pogorzelski,
\emph{From Hardy to Rellich inequalities on graphs},
Proc. Lond. Math. Soc. \textbf{122} (2021), 458--477.
\href{https://doi.org/10.1112/plms.12376}{doi:10.1112/plms.12376}.

\bibitem{LizamaEarly}
C. Lizama,
\emph{Sharp discrete Hardy constants in dimensions three and four and strict upper bounds from dimension nine},
preprint (2026), \href{https://arxiv.org/abs/2608.30831}{arXiv:2608.30831}.

\bibitem{LizamaFull}
C. Lizama,
\emph{Dimensional transition and spectral instability in lattice Hardy inequalities},
preprint (2026), \href{https://arxiv.org/abs/2610.14950}{arXiv:2610.14950}.

\bibitem{ReedSimon}
M. Reed and B. Simon,
\emph{Methods of Modern Mathematical Physics. IV: Analysis of Operators},
Academic Press, New York, 1978.

\bibitem{SuYang}
D. Su and Q.-H. Yang,
\emph{On the best constants of Hardy inequality in
$\R^{n-k}\times(\R_+)^k$ and related improvements},
J. Math. Anal. Appl. \textbf{389} (2012), no.~1, 48--53.
\href{https://doi.org/10.1016/j.jmaa.2011.11.033}{doi:10.1016/j.jmaa.2011.11.033}.

\bibitem{YessirkegenovZhangirbayev}
N. Yessirkegenov and A. Zhangirbayev,
\emph{Sharp forms and quantitative stability for general weighted discrete $p$-Hardy inequalities},
preprint (2026), \href{https://arxiv.org/abs/2604.02229}{arXiv:2604.02229}.

\end{thebibliography}
\end{document}